\documentclass[10pt]{article}
\usepackage{latexsym,amsfonts,amsthm,amsmath,epsfig,amssymb}
\usepackage{amsfonts}
\usepackage{amsthm}
\usepackage{amsmath,amssymb}
\usepackage{booktabs}
\usepackage{graphicx}

\newtheorem{thm}{Theorem}[section]
\newtheorem{cor}{Corollary}[section]
\newtheorem{lemma}{Lemma}[section]

\newtheorem{example}{Example}[section]
\newcounter{nextauthor}
\setcounter{nextauthor}{1}
\def\mathrm{\mbox}

\numberwithin{remark}{section}

 \textheight 9.5in
 \textwidth 6.4in
 \topmargin -10mm
 \oddsidemargin 2mm
 \evensidemargin 2mm
 \parskip 1mm
\begin{document}
\title{\Large {\bf Globalized distributionally robust optimization problems under the moment-based framework}\thanks{This work was supported by the National Natural Science Foundation of China (11471230, 11671282) and the Fundamental Research Funds for the Central Universities, Southwest Minzu
University (2018NQN28) and Southwestern University of Finance and Economics (JBK 2002006, JBK1805001).}}
\author{
{ Ke-wei Ding}$^{a}$ { Nan-jing Huang}$^{b}$\footnote{To whom all correspondences should be addressed: nanjinghuang@hotmail.com; njhuang@scu.edu.cn}\, and { Lei Wang}$^{c}$\\
$^a$\, {\small School of Computer Science and Technology, Southwest Minzu University,}\\
{\small Chengdu, Sichuan 610041, P.R. China}
\\
$^b$\, {\small Department of Mathematics, Sichuan University,}\\
{\small Chengdu, Sichuan 610064, P.R. China}
\\
$^c$\, {\small School of Economic Mathematics,  Southwestern University of Finance and Economics,}\\
{\small Chengdu, Sichuan 611130, P.R. China}
}
\date{}
\maketitle
\vspace*{0mm}
\begin{center}
\begin{minipage}{5.9in}
{\bf Abstract.}
This paper is devoted to reduce the conservatism of distributionally robust optimization with moments information.
Since the optimal solution of distributionally robust optimization is required to be feasible for all uncertain distributions
in a given ambiguity distribution set and so the conservatism of the optimal solution is inevitable.
To address this issue, we introduce the globalized distributionally robust counterpart (GDRC) which allows constraint violations controlled by functional distance of the true distribution to the inner uncertainty distribution set. We obtain the deterministic equivalent forms for several GDRCs under the moment-based framework. To be specific, we show the deterministic equivalent systems of inequalities for the GDRCs under second order moment information with a separable convex distance function and a jointly convex distance function, respectively.  We also obtain the deterministic equivalent inequality for the GDRC under first order moment and support information. The computationally tractable examples are presented for these GDRCs. A numerical tests of a portfolio optimization problem is given to show the efficiency of our methods and the results demonstrate that the globalized distributionally robust solutions is non-conservative and flexible compared to the distributionally robust solutions.

\vskip 0.1cm

{\bf Key words and phrases: } Distributionally robust optimization; Moments information; Constraint violations; Conjugate function.

\vskip 0.1cm

{\bf 2010 Mathematics Subject Classification:} 90C15; 90C25
\end{minipage}
\end{center}

\section{Introduction}
It is well known that the optimal solution of mathematical programming heavily depends on the value of parameters.
However, uncertainty of parameters arises from estimation errors or implementation errors can not be avoided in many real-world problems, and the optimal solution under inappropriate estimations may deliver a poor performance for the true optimization problem. For overcoming this issue, robust optimization (RO) which requires that the optimal solution must be feasible for all realizations in a given uncertainty set has been a popular method when the parameters do not admit a stochastic nature, for more details, please see \cite{BGK,BGN,FW,Sim,SSS} and the references given there. When the uncertain parameters are viewed as random variables with unknown distribution, an important way to deal with the ambiguity of distribution information is the distributionally robust optimization (DRO) method.

DRO which is famous as minimax stochastic optimization has been a significant framework for modeling optimization problems with uncertainty parameters. Most DROs are developed with the purpose of achieving a computationally tractable forms. Choosing suitable ambiguity distribution sets to get tractability is an important issue in DROs. Wide range of the ambiguity distribution sets are introduced and the corresponding distributionally robust optimization problems have been rewritten as (or approximated by) computationally tractable deterministic forms, see \cite{BBB,CSZ,EKDR,GLTS,JWMYZ,ShapiroD,WKS,XG} for more details. Using moments information estimated from history samples to construct the ambiguity distribution sets is an important way in DROs. This method has been pioneered by Scarf \cite{S} who studied a single-product newsvendor problem under an ambiguity set with known mean and variance. Goh and Sim \cite{GohSim} considered distributionally robust linear programs under ambiguity sets with support, mean, covariance and directional deviations information. Delage and Ye \cite{DY} demonstrated that distributionally robust stochastic program can be solved efficiently under the ellipsoid-type uncertainty set on mean and covariance. Natarajan et al. \cite{NMU} obtained tractable conic representation for a maximin expected utility model with several box-type uncertainty moments sets. Liu et al. \cite{LLZ} approximated a distributionally robust optimization of an emergency medical service station location and sizing problem as a parametric second-order conic representable program under ellipsoid-type uncertainty moments information set. Gourtani et al. \cite{GNX} reformulated  distributionally robust facility location problem  as a semi-definite program under known second order moment information and also reformulated the problem as a semi-infinite program under known first order moment information.

However, an inevitable issue is the optimal solution of DRO is extremely conservative since DRO takes the worst case scenario in the ambiguity information set which contains all the ``physically possible" realizations. One method to settle down the conservatism is to reduce the size of the ambiguity information set. But the optimistic solution produced by this way may deliver a poor performance since the true distribution may get out of the chosen distribution information set in practice. How to reduce the conservatism of the optimal solution and give more flexibility to the ambiguity set are not only an important issue in DRO, but also in RO. In RO field, Ben-Tal et al. \cite{BBHV} introduced the globalized robust counterpart which allowed controlled constraint violations in a larger uncertainty set to give more flexibility to the uncertainty set.
Compared with the classical robust counterpart, the globalized robust counterpart gives
the decision makers more flexibility to release the feasibility requirement of the uncertainty set in
a control way. The globalized robust counterpart originally introduced in \cite{BBN} has been named the comprehensive
robust counterpart, and also has been discussed in \cite{GBHB} for reducing the conservatism of the robust counterpart by viewing the outer and inner uncertainty sets mentioned in \cite{BBHV} as "physically possible" set and "normal range" set of realizations, respectively.

Since the working mechanism of DRO and RO are extremely analogical, it is natural to introduce globalized ideology to DROs for reducing the conservatism of DROs or releasing the feasibility requirement of the uncertainty distribution set. The main purpose of this paper is to develop a new method called the globalized distributionally robust optimization to reduce the conservatism of the optimal solution.

The rest of the paper is organized as follows. In the next section we introduce the globalized distributionally robust counterpart (GDRC) and refine it into a moment-based framework. After that Section 3 obtains the deterministic equivalent system of inequalities for the GDRC under assumptions that the first and second order moments belong to corresponding convex and compact sets respectively with a distance function that is separable about mean and covariance. In Section 4 we present the deterministic equivalent system of inequalities for the GDRC with a special function which is jointly convex in mean and covariance. Section 5 is addressed to obtain the deterministic equivalent inequality for the GDRC under assumptions that the first order moment set and the support set are convex and compact when the constraint function is concave in uncertain parameter. In Section 6, we present a numerical experiment to show the effectiveness of our method, before we summarize the results in Section 7.

{\bf Notation}

Let $f : \mathbb{R}^{k\times k}\rightarrow \mathbb{R}$. The convex conjugate of $f$ is defined as
$$f^*(Y)=\sup_{X\in {\rm dom}(f)}\{\langle X,Y\rangle -f(X) \},$$
where $\langle X,Y\rangle= tr(X^T Y)$ denotes the trace scalar product.
The concave conjugate of $g$ is defined as
$$f_*(Y)=\inf_{X\in {\rm dom}(-f)}\{\langle X,Y\rangle -f(X) \}.$$
For a function $f(\cdot,\cdot)$ with two variables, $f^*(\cdot,\cdot)$ denotes the partial convex conjugate function with respect to the first variable. At
the same time, $f^*(\cdot;\cdot)$ denotes the convex conjugate function with respect to both two variables.
Similarly, we can define $f_*(\cdot,\cdot)$ and $f_*(\cdot;\cdot)$. The support function of the set $S\subset \mathbb{R}^{k\times k}$ is defined as $\delta^*(Y|S)=\sup_{X\in S} tr(X^T Y).$

The space of symmetric matrices of dimension $k$ is denoted by $\mathbb{S}^k$. Let $p\in  [1,+\infty]$ and $A \in \mathbb{S}^k$ be a real symmetric matrix. Then the Schatten norm $\|\cdot\|_{\sigma p}$ can be defined by
\begin{eqnarray*}
\|A\|_{\sigma p}\triangleq
\left\{ \begin{array}{ll}
\left( \sum_{i=1}^{k} \sigma_i^p(A)   \right)^{\frac{1}{p}},\, &\,\,\, 1\leq p< \infty,
\\
\sigma_{max}\left(A \right),\, & \,\,\,p=\infty. \end{array} \right.
\end{eqnarray*}
where $\sigma_i(A)$ is the absolute value of the $i$-largest eigenvalue of $A$ which has real eigenvalues.
Let $A,B \in \mathbb{S}^k$ and $p,q\in [1,+\infty]$ satisfy $\frac{1}{p}+\frac{1}{q}=1$. Then
$tr(A^T B)\leq \|A\|_{\sigma p}\|B\|_{\sigma q}$ (see \cite{B}). In particular,
$tr(A^T B)\leq  \sigma_{max}\left(A \right)\|B\|_{\sigma 1}$.

\begin{lemma}\cite{BT}\label{Fenchel}
Let $f,-g:X \rightarrow (-\infty,+\infty]$ be proper convex and lower-semicontinuous functions and
 $X$ a real Banach space. If there exists an element $x_0\in \mbox{dom}(f)\bigcap \mbox{dom}(g)$ such that either $f$
or $g$ is continuous at $x_0$, then the following equality holds:
$$\min\{f(x)-g(x),x\in X\}=\max\{g_*(y)-f^*(y)|y\in X^*\}, $$
where $dom(f)$ is the effective domain of the function $f$, $X^*$ is the dual space of $X$.
\end{lemma}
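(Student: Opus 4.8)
The statement is the classical Fenchel--Rockafellar duality theorem, so the plan is to carry out the standard convex-analytic argument in the Banach-space setting. First I would record weak duality: for every $x\in X$ and $y\in X^*$ the definitions of the conjugates give the Fenchel--Young-type inequalities $f(x)\ge\langle x,y\rangle-f^*(y)$ and $g(x)\le\langle x,y\rangle-g_*(y)$, hence $f(x)-g(x)\ge g_*(y)-f^*(y)$; taking $\inf$ over $x$ and $\sup$ over $y$ yields $\inf_{x\in X}\{f(x)-g(x)\}\ge\sup_{y\in X^*}\{g_*(y)-f^*(y)\}$. Write $p$ for the left-hand value; since $x_0\in\mathrm{dom}(f)\cap\mathrm{dom}(g)$ it is $<+\infty$, and if $p=-\infty$ then weak duality forces the right-hand side to be $-\infty$ as well and there is nothing more to show, so assume $p\in\mathbb{R}$. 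Everything then reduces to producing some $\bar y\in X^*$ with $g_*(\bar y)-f^*(\bar y)\ge p$, since weak duality will upgrade this to equality and to attainment of the right-hand supremum at $\bar y$.

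To build such a $\bar y$ I would separate two convex subsets of $X\times\mathbb{R}$: the epigraph $C_1=\{(x,t):t\ge f(x)\}$ and the shifted hypograph $C_2=\{(x,t):t\le g(x)+p\}$. By definition of $p$ one has $f(x)\ge g(x)+p$ for all $x$, so any point of $\mathrm{int}(C_1)$ satisfies $t>f(x)\ge g(x)+p$ and hence lies outside $C_2$; moreover continuity of $f$ at $x_0$ (with $x_0\in\mathrm{dom}(f)$) makes $\mathrm{int}(C_1)$ nonempty. The geometric Hahn--Banach separation theorem then yields a nonzero pair $(y^*,\alpha)\in X^*\times\mathbb{R}$ and a scalar $\beta$ with $\langle x,y^*\rangle+\alpha t\ge\beta$ on $C_1$ and $\langle x,y^*\rangle+\alpha t\le\beta$ on $C_2$. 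Letting $t\to+\infty$ inside $C_1$ forces $\alpha\ge0$; and $\alpha=0$ is impossible, because it would give $\langle x,y^*\rangle\ge\beta$ on $\mathrm{dom}(f)$ and $\langle x,y^*\rangle\le\beta$ on $\mathrm{dom}(g)$, so that $\langle x_0,y^*\rangle=\beta$ while $x_0$ lies in the interior of $\mathrm{dom}(f)$, which forces $y^*=0$ and contradicts $(y^*,\alpha)\ne0$ --- this is exactly the place where the constraint qualification is used. Hence $\alpha>0$, and dividing through we may take $\alpha=1$. Evaluating the two inequalities at $t=f(x)$ and $t=g(x)+p$ respectively gives $\langle x,-y^*\rangle-f(x)\le-\beta$ and $\langle x,-y^*\rangle-g(x)\ge p-\beta$ for all $x$, i.e. $f^*(-y^*)\le-\beta$ and $g_*(-y^*)\ge p-\beta$; subtracting, $g_*(-y^*)-f^*(-y^*)\ge p$, so $\bar y=-y^*$ does the job. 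If instead it is $g$ that is continuous at $x_0$, the same argument applies after swapping the roles of $C_1$ and $C_2$ (equivalently, applying the result with $f$ and $-g$ interchanged).

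The main obstacle is the separation step. One has to be certain that $\mathrm{int}(\mathrm{epi}(f))$ is nonempty and genuinely disjoint from the shifted hypograph of $g$ --- this is precisely where the lower semicontinuity, the continuity at $x_0$, and the Banach-space structure enter --- and then to rule out the degenerate ``vertical'' separating hyperplane $\alpha=0$, which is the single point where the hypothesis $x_0\in\mathrm{dom}(f)\cap\mathrm{dom}(g)$ with continuity there is indispensable; without it weak duality need not be tight and a duality gap can appear.
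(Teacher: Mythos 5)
The paper gives no proof of this lemma---it is quoted verbatim from the reference \cite{BT}---and your argument is exactly the standard Hahn--Banach separation proof of Fenchel--Rockafellar duality found there: weak duality via the Fenchel--Young inequalities, then Eidelheit separation of $\mathrm{epi}(f)$ (nonempty interior supplied by continuity of $f$ at $x_0$) from the hypograph of $g+p$, with the constraint qualification invoked precisely where you invoke it to exclude a vertical separating hyperplane, and the resulting functional $-y^*$ attaining the dual supremum. The argument is correct; the one caveat is that, as in the classical theorem, it yields $\inf=\max$ (attainment on the dual side only), so the ``$\min$'' on the left of the stated identity should be read as an infimum unless additional coercivity is assumed.
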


\section{Globalized distributionally robust counterpart (GDRC)}
\noindent \setcounter{equation}{0}
In distributionally robust optimization, the authors study a variant of
the stochastic constraint where the probability distribution belong to a given ambiguity information set.
In this paper, we focus on the following distributionally robust counterpart
\begin{eqnarray*}\label{DRO1}
&&E_P[g(x,\xi)]\leq 0,\;\forall P\in \mathcal{P},
\end{eqnarray*}
where $x\in \mathbb{R}^n$ is the decision variable, $\xi: \Omega\rightarrow \Xi \in\mathbb{R}^k$ is the random variable defined on probability space $(\Omega,\mathfrak{F},P)$, $g:\mathbb{R}^n\times \mathbb{R}^k\rightarrow \mathbb{R}$ is a real value function, and $\mathcal{P}$ is defined as the uncertainty distribution set which contains all the "physically possible" realizations. Obviously, the optimal solution of distributionally robust optimization would be conservative since the distribution takes the worst case scenario in the ambiguity information set. To reduce the conservatism, we introduce the following globalized distributionally robust counterpart (GDRC)
\begin{eqnarray}\label{gdrc1}
E_P[g(x,\xi)]\leq \inf_{P'\in \mathcal{P}'} \mathcal{H}(P,P'),\;\forall P\in \mathcal{P},
\end{eqnarray}
where $\mathcal{P}'$ is viewed as the inner uncertainty distribution set of ``normal range" of realizations and
 $\mathcal{H}(\cdot,\cdot)$ is a nonnegative distance-like function which satisfies
$$P=P'\Leftrightarrow \mathcal{H}(P,P')=0,\,
P\neq P'\Leftrightarrow \mathcal{H}(P,P')>0.$$
Clearly, $\mathcal{P}'\subset \mathcal{P}$. The term on the right-hand side of \eqref{gdrc1} represents the
allowable violation of the constraint. Generally, the magnitude of the allowable violation is correlated with the ``distance"
between $\mathcal{P}$ and the set $\mathcal{P}'$. The further $\mathcal{P}$
is far from $\mathcal{P}'$, the bigger allowable violation will be chosen.
Actually, the ``physically possible" distributions that further away from the inner set $\mathcal{P}'$ are less to happen in practice, but all of them should be taken into consideration.

With the purpose of providing a reasonably robust portfolio selection for investors in the presence of rare but high-impact realization of moment uncertainty, Li and Kwon \cite{LK} used a penalized moment-based optimization approach to study the portfolio optimization problem and rewrote it into semidefinite programs. In the following, we also  refine GDRC \eqref{gdrc1} into a moment-based framework so that the requirement of the exact distribution description which is hard to obtain can be neglected. In the new framework, we replace distance-like function between distribution measures with distance function between second order moment information and define the following GDRC
\begin{eqnarray}\label{GDRC001}
E_P[g(x,\xi)]\leq \!\!\inf_{(\mu',\Sigma')\in \mathcal{U}_{(\mu'\!,\Sigma')}}\!\!\!\!\!\!\! H\left((\mu,\Sigma),(\mu',\Sigma')\right),\,\forall P\in \mathcal{P},
\end{eqnarray}
where $\mu$ and $\Sigma$ are mean vector and covariance matrix of the distribution measure $P$, respectively. Obviously, the structure of the distance function $H(\cdot,\cdot)$ is a key issue to solve globalized distributionally robust constraint \eqref{GDRC001}.
In \cite{LK}, the authors introduced the following distance-like function
$$H\left((\mu,\Sigma),(\mu',\Sigma')\right)=\left\{\sum_{i=1}^{\mathcal{I}}k_i r_i(\gamma_1,\gamma_2)\left|
\inf_{v\in U_{\mu}}\|\mu-v\|\leq \gamma_1,\,\inf_{\sigma\in U_{\Sigma}}\|\Sigma-\sigma\|\leq \gamma_2
\right.\right\}, $$
where $k_i\geq 0$ and $r_i$ is a convex norm function for $i=1,\cdots,\mathcal{I}$. Based on two independent distance functions $\inf_{v\in U_{\mu}}\|\mu-v\|$ and $\inf_{\sigma\in U_{\Sigma}}\|\Sigma-\sigma\|$, the total penalty distance measure is implemented using convex penalty functions $r_i$ which adjusted the ambiguity of mean and covariance dependently.

Different from
the above composite structure of the distance function, we want to directly define the distance function with the ambiguity mean and covariance, i.e.,
\begin{eqnarray*}
{\rm(C1)} \qquad  H\left((\mu,\Sigma),(\mu',\Sigma')\right)=\varphi(\mu,\mu')+\psi(\Sigma,\Sigma'),
\end{eqnarray*}
where  $\varphi(\mu,\mu')$ and $\psi(\Sigma,\Sigma')$ are two nonnegative jointly convex functions with $\varphi(\mu,\mu)=0$ and $\psi(\Sigma,\Sigma)=0$, respectively. Since ${\rm(C1)}$ type distance function has a general structure about mean and covariance, we can have more flexible choices about measuring the distance between $(\mu,\Sigma)$ and $(\mu',\Sigma')$. For example, let $\varphi(\mu,\mu')=\sum_{i=1}^{k}\mu'_{i}p(\frac{\mu_i}{\mu'_{i}})$, where $p(\cdot)$ is referred to as the $\phi$-divergence function and $p(t)$ is convex for $t\ge 0$ with $p(1)=0$.

Since the distance-like function $H\big((\mu,\Sigma),(\mu',\Sigma')\big)$ does not have the separable structure about mean and covariance, it is difficult to obtain the deterministic reformulation of GDRC \eqref{GDRC001}. An alternative way is to discuss GDRC \eqref{GDRC001} with some special distance functions, i.e.,
$${\rm(C2)} \qquad H\left((\mu,\Sigma),(\mu',\Sigma')\right)=\eta \left(\mu-\mu_0\right)^T\Sigma^{-1}\left(\mu-\mu_0\right),$$
where $\eta>0$. We notice that $H(\cdot,\cdot)$ is jointly convex in $\mu$ and $\Sigma$.

Sometimes, the ${\rm(C1)}$  and ${\rm(C2)}$ type functions are not valid when the ambiguity information set only contains mean and the support information (see \cite{GNX,LSXZ,NMU}). In this case, the distance function can be modified as follows:
$${\rm(C3)} \qquad  H\left((\mu,\Sigma),(\mu',\Sigma')\right)=\varphi(\mu,\mu'), $$
where $\varphi(\mu,\mu')$ is a nonnegative jointly convex function with $\varphi(\mu,\mu)=0$.

In addition, the computational tractability of constraint \eqref{gdrc1} also depends on the structure of distribution set $\mathcal{P}$ and $\mathcal{U}_{(\mu',\Sigma')}$. Various type of distributionally robust optimization problems have been discussed under various uncertainty moments sets such as ellipsoidal moments sets \cite{DY}, interval moments sets \cite{NMU}, convex and compact moments sets \cite{DHX} and so on. In the following, we focus on the GDRC \eqref{gdrc1} under assumptions that the first and second order moments belong to corresponding convex sets with ${\rm(C1)}$  type distance function and ${\rm(C2)}$ type distance function, respectively. We also discuss the GDRC \eqref{gdrc1} under assumptions that the first order moment set and the support information set are both convex and compact with ${\rm(C3)}$ type distance function.

\section{ GDRC with ${\rm(C1)}$ type function under second order moment information}
Generally speaking, the uncertainty set about mean vector and covariance matrix is separable in most cases \cite{DY,MZDR,NMU}.
In this section, we assume that the mean vector and covariance matrix for the random return $\xi$ belong to different convex sets, respectively. Thus, the ambiguity information set has the following form
\begin{eqnarray}\label{P}
\mathcal{P}_1
=\left\{
P\in
\mathfrak{P}: \mu\in \mathcal{U}_{\mu1},\,
\Sigma\in \mathcal{U}_{\Sigma1}
\right\},
\end{eqnarray}
where $\mathfrak{P}$ denotes the set of all probability measures on space $(\mathbb{R}^k,\mathfrak{B})$
with the Borel $\sigma$-algebra $\mathfrak{B}$ on $\mathbb{R}^k$,  $\mathcal{U}_{\mu1}$ and $\mathcal{U}_{\Sigma1}$, two convex sets, represent the ``physically possible" realizations for mean and covariance, respectively. Similarly, we assume that
\begin{eqnarray*}
\mathcal{U}_{(\mu',\Sigma')}=\left\{(\mu',\Sigma'):\mu'\in\mathcal{U}_{\mu2},\Sigma'\in\mathcal{U}_{\Sigma2}\right\},
\end{eqnarray*}
where $\mathcal{U}_{\mu2}$ and $\mathcal{U}_{\Sigma2}$, two compact and convex sets, denote the ``normal range" of realizations for mean and covariance, respectively.  Hence, the GDRC in terms of the moment-based framework with ${\rm(C1)}$ type distance function can be reformulated as
\begin{eqnarray}\label{gdrc1+1}
E_{P\thicksim(\mu,\Sigma)}[g(x,\xi)]\leq
\min_{\mu'\in \mathcal{U}_{\mu2}} \varphi(\mu,\mu')
+\min_{\Sigma'\in \mathcal{U}_{\Sigma2}} \psi(\Sigma,\Sigma')
,\,\forall P\in \mathcal{P}_1.
\end{eqnarray}

If $\mu\in \mathcal{U}_{\mu2}$ and $\Sigma\in \mathcal{U}_{\Sigma2}$, then $\min_{\mu'\in \mathcal{U}_{\mu2}} \varphi(\mu,\mu')=0$,  $\min_{\Sigma'\in \mathcal{U}_{\Sigma2}} \psi(\Sigma,\Sigma')=0$ and so the globalized distributionally robust constraint can be simplified to the original constraint $E_{P}[g(x,\xi)]\leq 0, \,\forall P\in \mathcal{P}_1$.  If $\mu\in \mathcal{U}_{\mu1} \backslash\mathcal{U}_{\mu2}$ and $\Sigma\in \mathcal{U}_{\Sigma1}\backslash \mathcal{U}_{\Sigma2}$, then the allowable violation of \eqref{gdrc1+1} is controlled by the sum of the distance of $\mu$ to $\mathcal{U}_{\mu2}$ and the distance of $\Sigma$ to $\mathcal{U}_{\Sigma2}$.

In practise, we can estimate the uncertain mean $\mu$ and covariance $\Sigma$ from historical data. Denote the empirical mean and covariance by $\mu_0$ and $\Sigma_0$. In general, the distance between the exact first (resp. second) order moment $\mu$ (resp., $\Sigma$) and the empirical first (resp., second) order moment $\mu_0$ (resp., $\Sigma_0$) is not large. Then, the uncertain sets $\mathcal{U}_{\mu i}$ and $\mathcal{U}_{\Sigma i}$ can be defined as follows:
\begin{eqnarray}\label{+dis}
\mathcal{U}_{\mu i}=\left\{\mu=\mu_0+A\zeta\, |\,\zeta\in U_i \right\},\,\mathcal{U}_{\Sigma i}=\left\{\Sigma=\Sigma_0+\Xi\, |\,\Xi\in Z_i \right\},\,i=1,2,
\end{eqnarray}
where $\zeta$ and $\Xi$ are mentioned as primitive uncertainties, $A\in \mathbb{S}^k$, $U_1$ and $Z_1$ are convex, and $U_2$ and $Z_2$ are compact and convex sets with $0$ element. Here we assume that $Z_2\subset Z_1$ and $U_2\subset U_1$.  Thus, one can see that $\mathcal{U}_{\Sigma 2}\subset  \mathcal{U}_{\Sigma 1}$ immediately.

In order to obtain the reformulation of problem \eqref{gdrc1+1},  we assume that the constraint function has the following piecewise-linear form
$$g(x,\xi)=\max_{i=1,2,\cdots,m}\{a_i(x)+b_iw^T\!(x)\xi\},$$
where $a_i(x)$ and $w(x)$ are affine functions. Usually this general piecewise linear function $g(x,\xi)$ can be extended to the popular CVaR risk measure and a more general optimized certainty equivalent risk measure \cite{NMU}. Moreover, this structure also can be transformed into a piecewise linear utility function
$u(x,\xi):=\min_{i=1,2,\cdots,m}\left\{c_i+d_ix^T\xi  \right\}$ with the decision variable $x$ and the return vector $\xi$ when we set $a_i(\cdot)=-c_i$ and $b_i=-d_i$ (see \cite{LK}). Next, we will show that GDRC \eqref{gdrc1+1} under information set \eqref{P} with ${\rm(C1)}$ type distance function can be reformulated as a deterministic equivalent system of inequalities.
\begin{thm}\label{gdrcth1}
GDRC \eqref{gdrc1+1} under information set \eqref{P} can be reformulated as the following system of inequalities:
\begin{eqnarray}\label{th1gdr}
\left\{
\begin{array}{l}
p+s\le 0,\,t^2-4zv\leq 0,\, z>0,\\
tr\left(Q^T\Sigma_0\right)
+\delta^*\left(\left(Q-Y\!\right)|Z_{1}\right)
+\delta^*\left(Y|Z_{2}\right)+\psi^*\left(Y;-Y\right)+v-s
\leq 0,\\
\delta^*\big(A\left(b_iw(x)\!-\!\lambda_i\!\right)|U_{1}\big)+\delta^*\left(A\lambda_i|U_{2}\right) +\varphi^*\left(\lambda_i;-\lambda_i\right)\\
\qquad\qquad\qquad+ b_iw^T\!(x)\mu_0
-p+a_i(x)-b_it+b_i^2 z\leq 0 ,\,\forall\, i=1,2,\cdots, m,\\
\left(
\begin{array}{cc}
4z &  w^T\!(x)\\
w(x)&Q \\
\end{array} \right)\succeq 0,
\end{array}
\right.
\end{eqnarray}
where $p,s,z,t,v\in \mathbb{R}, x\in \mathbb{R}^n$, $\lambda_i\in \mathbb{R}^k$, and $Y,Q\in \mathbb{R}^{k\times k}$ are decision variables for $i=1,2,\cdots,m$. Moreover, the feasible set of system \eqref{th1gdr} is convex.
\end{thm}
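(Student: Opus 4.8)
The plan is to route everything through the worst‑case expectation. Since the constraint in \eqref{gdrc1+1} must hold for \emph{every} $P\in\mathcal{P}_1$, and $\mathcal{P}_1$ in \eqref{P} consists of all distributions whose mean lies in $\mathcal{U}_{\mu1}$ and whose covariance lies in $\mathcal{U}_{\Sigma1}$, \eqref{gdrc1+1} is equivalent to
$$\sup_{P\sim(\mu,\Sigma)}E_P[g(x,\xi)]\ \le\ \min_{\mu'\in\mathcal{U}_{\mu2}}\varphi(\mu,\mu')+\min_{\Sigma'\in\mathcal{U}_{\Sigma2}}\psi(\Sigma,\Sigma'),\qquad \forall\,\mu\in\mathcal{U}_{\mu1},\ \Sigma\in\mathcal{U}_{\Sigma1}.$$
As $g(x,\xi)=\max_i\{a_i(x)+b_iw^T(x)\xi\}$ depends on $\xi$ only through the scalar $w^T(x)\xi$, the inner supremum is a one–dimensional moment (Chebyshev) problem for a random variable with mean $w^T(x)\mu$ and variance $w^T(x)\Sigma w(x)$. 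Dualizing it, completing the square, and reparametrizing the dual variables of the quadratic majorant $\alpha+\beta\tau+\gamma\tau^2$ (through $z=1/(4\gamma)$ and a shifted linear coefficient $t$) yields
$$\sup_{P\sim(\mu,\Sigma)}E_P[g(x,\xi)]\ =\ \inf_{z>0,\ t\in\mathbb{R}}\Big\{\max_{1\le i\le m}\big(a_i(x)-b_it+b_iw^T(x)\mu+b_i^2z\big)+\frac{t^2+w^T(x)\Sigma w(x)}{4z}\Big\}.$$
The essential feature is that for fixed $(z,t)$ this is a maximum of functions affine in $\mu$ plus a term affine in $\Sigma$: no quadratic‑in‑$\mu$ term survives. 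Introducing $v$ with $t^2-4zv\le0$ and a symmetric $Q$ with $Q\succeq w(x)w^T(x)/(4z)$ — which, by a Schur complement, is precisely the LMI $\left(\begin{array}{cc}4z&w^T(x)\\ w(x)&Q\end{array}\right)\succeq0$ in \eqref{th1gdr} — gives $t^2/(4z)\le v$ and $w^T(x)\Sigma w(x)/(4z)\le tr(Q^T\Sigma)$ for every covariance $\Sigma\succeq0$, so the constraint becomes: $\max_i\big(a_i(x)-b_it+b_iw^T(x)\mu+b_i^2z\big)+v+tr(Q^T\Sigma)\le \min_{\mu'}\varphi(\mu,\mu')+\min_{\Sigma'}\psi(\Sigma,\Sigma')$ for all $\mu\in\mathcal{U}_{\mu1}$, $\Sigma\in\mathcal{U}_{\Sigma1}$.

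The left‑hand side now decouples into a $\mu$‑part and a $\Sigma$‑part, so I would introduce $p,s$ with $p+s\le0$ and impose, for each $i$, $\sup_{\mu\in\mathcal{U}_{\mu1}}\big[b_iw^T(x)\mu-\min_{\mu'\in\mathcal{U}_{\mu2}}\varphi(\mu,\mu')\big]+a_i(x)-b_it+b_i^2z\le p$, and $\sup_{\Sigma\in\mathcal{U}_{\Sigma1}}\big[tr(Q^T\Sigma)-\min_{\Sigma'\in\mathcal{U}_{\Sigma2}}\psi(\Sigma,\Sigma')\big]+v\le s$. Each supremum has the form ``linear functional minus jointly convex distance'' over the affine sets \eqref{+dis}; writing $\mu=\mu_0+A\zeta$ ($\zeta\in U_j$), $\Sigma=\Sigma_0+\Xi$ ($\Xi\in Z_j$), using $A=A^T$, and applying Fenchel duality (Lemma~\ref{Fenchel} / Fenchel--Young) with the conjugate‑of‑a‑sum rule, the $i$‑th mean supremum is bounded above for any $\lambda_i\in\mathbb{R}^k$ by $b_iw^T(x)\mu_0+\delta^*\big(A(b_iw(x)-\lambda_i)\,|\,U_1\big)+\delta^*(A\lambda_i\,|\,U_2)+\varphi^*(\lambda_i;-\lambda_i)$ and equals its infimum over $\lambda_i$, while the covariance supremum is bounded for any $Y\in\mathbb{R}^{k\times k}$ by $tr(Q^T\Sigma_0)+\delta^*\big((Q-Y)\,|\,Z_1\big)+\delta^*(Y\,|\,Z_2)+\psi^*(Y;-Y)$ and equals its infimum over $Y$; here $Z_2\subset Z_1$, $U_2\subset U_1$ and compactness of $U_2,Z_2$ enter. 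Inserting these estimates reproduces system \eqref{th1gdr}; conversely, starting from any feasible point of \eqref{th1gdr} the same Fenchel--Young inequalities, now read as weak‑duality bounds, together with $v\ge t^2/(4z)$, $tr(Q^T\Sigma)\ge w^T(x)\Sigma w(x)/(4z)$ from the LMI, and $p+s\le0$, recover the displayed worst‑case inequality and hence \eqref{gdrc1+1}.

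Convexity of the feasible set of \eqref{th1gdr} is then a matter of inspection: $p+s\le0$ is affine; $\{z>0,\ t^2-4zv\le0\}$ is the epigraph of the convex perspective function $(t,z)\mapsto t^2/(4z)$ on $\{z>0\}$; the LMI is the preimage of the positive semidefinite cone under an affine map (recall $w(\cdot),a_i(\cdot)$ are affine); and the left‑hand side of each of the two families of inequalities is a finite sum of an affine function of $(x,p,s,t,z,Q)$, of support functions $\delta^*(\cdot\,|\,U_j),\ \delta^*(\cdot\,|\,Z_j)$ and of the conjugates $\varphi^*(\,\cdot\,;\,\cdot\,),\ \psi^*(\,\cdot\,;\,\cdot\,)$ — all convex, the support functions and conjugates composed where needed with affine maps in $(x,\lambda_i,Q,Y)$. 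An intersection of convex sets is convex, which gives the last assertion.

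The delicate part is the equivalence rather than the convexity. The implication ``\eqref{th1gdr} feasible $\Rightarrow$ \eqref{gdrc1+1}'' uses only weak (Fenchel--Young and Schur) inequalities, as just described. The converse requires strong duality for the inner one‑dimensional moment problem — valid under a Slater‑type condition such as $w^T(x)\Sigma w(x)>0$, with the degenerate case treated separately — and, more seriously, the interchange of the universal quantifier over $(\mu,\Sigma)\in\mathcal{U}_{\mu1}\times\mathcal{U}_{\Sigma1}$ with the minimizations over the auxiliary variables $z,t$ (and $\lambda_i,Y$): one must pass from ``for every $(\mu,\Sigma)$ suitable multipliers exist'' to ``multipliers exist that are good for every $(\mu,\Sigma)$''. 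Since the pertinent saddle function is convex in the multipliers but, through the $\max_i$ of affine‑in‑$\mu$ terms, is not concave in $(\mu,\Sigma)$, Sion's minimax theorem is not directly applicable; the route is to write $\max_i(\cdot)=\sup_{\lambda\in\Delta_m}\sum_i\lambda_i(\cdot)$ (with $\Delta_m$ the unit simplex) and carry out the minimax interchanges one at a time, relying on compactness of $\Delta_m$ and of the normal‑range sets $\mathcal{U}_{\mu2},\mathcal{U}_{\Sigma2}$ (which also guarantees attainment of the Fenchel minima, hence existence of the $\lambda_i,Y$). Keeping the dual variables uniform in $(\mu,\Sigma)$ is the technical heart of the proof.
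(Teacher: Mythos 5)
Your proposal is correct and follows essentially the same route as the paper: reduce to the worst-case expectation, solve the inner one-dimensional Chebyshev moment problem by duality and completion of the square, linearize $w^T(x)\Sigma w(x)/(4z)$ via $Q$ and the Schur complement, decouple the mean and covariance parts with $p+s\le 0$, and dualize each supremum with Fenchel/Lagrangian conjugates to obtain the support functions and $\varphi^*(\lambda_i;-\lambda_i)$, $\psi^*(Y;-Y)$ terms. The only difference is bookkeeping: the paper justifies the sup--inf interchange by a direct contradiction argument on an optimal solution and derives the single multiplier $Y$ by first introducing a pair $(Y_1,Y)$ and showing $Y_1=-Y$, whereas you flag the interchange as the technical heart and propose the simplex representation of the max --- a more explicit but equivalent treatment.
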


\begin{proof}
Rewriting semi-infinite constraint \eqref{gdrc1+1}, one has
\begin{eqnarray}\label{gdrc2}
\sup_{P\in \mathcal{P}_1,\mu'\in \mathcal{U}_{\mu2},\Sigma'\in \mathcal{U}_{\Sigma2}}
\left\{
  E_{P\thicksim(\mu,\Sigma)}\left[\max_{i=1,2,\cdots,m}\{a_i(x)+b_iw^T\!(x)\xi\}\right]-\varphi(\mu,\mu')
- \psi(\Sigma,\Sigma')\right\}\leq 0.
\end{eqnarray}
We construct the following auxiliary information set
\begin{eqnarray*}
\mathcal{P}_{11}\left(\mathbb{R}^k,{\mu},{\Sigma}\right)=\left\{
P\in
\mathfrak{P}: P({\xi}\in
\mathbb{R}^k)=1,\, E[{\xi}]={\mu},\,E[({\xi}-{E}[{\xi}])({\xi}-{E}[{\xi}])^T]={\Sigma}
\right\},
\end{eqnarray*}
where the mean vector $\mu$ and covariance matrix $\Sigma$ are given.  Then, we can reformulate the left-hand side term of inequality \eqref{gdrc2} as
\begin{eqnarray}\label{supsup}
\sup_{
\mu\in \mathcal{U}_{\mu1},
\Sigma\in \mathcal{U}_{\Sigma1},
\mu'\in \mathcal{U}_{\mu2},\Sigma'\in \mathcal{U}_{\Sigma2}
}\!\!
\left\{
\sup_{{P}\in
\mathcal{P}_{11}}\!\!
E_{P}\!\!\left[\max_{i=1,2,\cdots,m}\left\{a_i(x)+b_iw^T\!(x)\xi\right\}\right]
-\varphi(\mu,\mu')- \psi(\Sigma,\Sigma')\right\}.
\end{eqnarray}
Let $y=w^T\!(x)\xi$. Clearly, the mean and variance of $y$ are $w^T\!(x){\mu}$
and $w^T\!(x){\Sigma}\,w(x)$, respectively. Thus, the inner subordinate expectation problem in \eqref{supsup} under
information set $\mathcal{P}_{11}$ can be unfolded as follows:
\begin{eqnarray}\label{org2+}
&&\sup_{\zeta\in \mathcal{M}_+}\;
\int_{\mathbb{R}}\max_{i=1,2,\cdots,m}\left\{a_i(x)+b_iy\right\}\zeta(dy)\\
&&\;\;\mbox{s.t.}\;\; \int_{\mathbb{R}}\zeta(dy)=1,\; \int_{\mathbb{R}} y\,\zeta(dy)=w^T\!(x){\mu},
\int_{\mathbb{R}} y^2\zeta(dy)=w^T\!(x){\Sigma}\,w(x)+(w^T\!(x){\mu})^2,\nonumber
\end{eqnarray}
where $\mathcal{M}_+$ represents the cone of nonnegative Borel measures on $\mathbb{R}$ and $\zeta(\cdot)$ is the decision variable. The dual of problem \eqref{org2+} can be rewritten as follows:
\begin{eqnarray}\label{org1}
&&\inf_{\gamma_0,\gamma_1,\gamma_2}
\gamma_0+\gamma_1w^T\!(x){\mu}+\gamma_2\left(w^T\!(x){\Sigma}\,w(x)+(w^T\!(x){\mu})^2\right)\\\nonumber
&&\quad\,\mbox{s.t.}\quad \gamma_0-a_i(x)+(\gamma_1-b_i)y+\gamma_2y^2\geq 0,\; \forall y\in \mathbb{R},\; i=1,2,\cdots,m,
\end{eqnarray}
where $\gamma_0,\gamma_1,\gamma_2\in \mathbb{R}$ are the dual variables for the constraints respectively.   It is easy to verify that the feasible region is nonempty when $\gamma_2>0$ and the minimum value in the left side is attained at $y^*=\frac{b_i-\gamma_1}{2\gamma_2}$, for $i=1,2,\cdots,m$. Therefore, problem (\ref{org1}) can be reformulated as
\begin{eqnarray}\label{org2}
&&\inf_{\gamma_0,\gamma_1,\gamma_2}
\gamma_0+\gamma_1w^T\!(x){\mu}+\gamma_2\left(w^T\!(x){\Sigma}\,w(x)+
\left(w^T\!(x){\mu}\right)^2\right)\\\nonumber
&&\quad\mbox{s.t.}\;\; \gamma_0-a_i(x)-\frac{(\gamma_1-b_i)^2}{4\gamma_2}\geq 0, \; i=1,2,\cdots,m.
\end{eqnarray}
Suppose $z>0$ and let
$$
\gamma_0=p+\frac{1}{4z}{(t-{w}^T({x}){\mu})^2}, \;\; \gamma_1=\frac{1}{2z}(t-{w}^T({x}){\mu}),\;\; \gamma_2=\frac{1}{4z}.
$$
Then, problem (\ref{org2}) is equivalent to
\begin{eqnarray*}\label{org3}
&&\inf_{p,t,z,s}\;\; p+s\\\nonumber
&&\;\mbox{s.t.}\quad\; \frac{w^T\!(x){\Sigma}\,w(x)+t^2}{4z}-s \leq 0,\;z> 0,\\\nonumber
&&\quad\quad\;\;\; b_iw^T\!(x){\mu}-p+a_i(x)-b_it+b_i^2 z\leq 0,\,i=1,2,\cdots, m
\end{eqnarray*}
and so problem \eqref{supsup} becomes
\begin{eqnarray}\label{+obje}
&&\sup_{\mu\in \mathcal{U}_{\mu1},
\Sigma\in \mathcal{U}_{\Sigma1},\mu'\in \mathcal{U}_{\mu2},\Sigma'\in \mathcal{U}_{\Sigma2}}\inf_{p,t,z,s}\;\; p+s-\varphi(\mu,\mu')-\psi(\Sigma,\Sigma')\\\nonumber
&&\;\mbox{s.t.}\quad\; \frac{w^T\!(x){\Sigma}\,w(x)+t^2}{4z}-s \leq 0,\;z> 0,\\\nonumber
&&\quad\quad\;\;\;b_iw^T\!(x){\mu}-p+a_i(x)-b_it+b_i^2 z\leq 0 ,\,i=1,2,\cdots, m.
\end{eqnarray}
When the uncertainty variables have been dispersed in several constraints function, the optimal solution become conservative.
Next we try to collect $\mu$ and $\Sigma$ in one constraint, respectively. By eliminating $p$ and $s$, we transform problem \eqref{+obje} into
\begin{eqnarray*}\label{+obje2}
&&\sup_{\scriptstyle \mu,\mu' \atop \scriptstyle \Sigma,\Sigma'}  \inf_{t,z>0}\, \max_{i=1,2,\cdots, m}\left\{ b_iw^T\!(x){\mu}+a_i(x)-b_it+b_i^2 z\right\}-\varphi(\mu,\mu')+\frac{w^T\!(x){\Sigma}\,w(x)+t^2}{4z}-\psi(\Sigma,\Sigma').
\end{eqnarray*}
By introducing new variables $p$ and $s$ again, we immediately reformulate the above problem as follows:
\begin{eqnarray}\label{+max-min}
&&\sup_{\mu\in \mathcal{U}_{\mu1},
\Sigma\in \mathcal{U}_{\Sigma1},\mu'\in \mathcal{U}_{\mu2},\Sigma'\in \mathcal{U}_{\Sigma2}}\inf_{p,t,z,s}\;\; p+s\\ \nonumber
&&\;\mbox{s.t.}\quad\; \frac{w^T\!(x){\Sigma}\,w(x)}{4z}-\psi(\Sigma,\Sigma')+\frac{t^2}{4z}-s \leq 0,\;z> 0,\\\nonumber
&&\quad\quad\;\;\;b_iw^T\!(x){\mu}-\varphi(\mu,\mu')-p+a_i(x)-b_it+b_i^2 z\leq 0 ,\,i=1,2,\cdots, m.
\end{eqnarray}
In what follows, we will prove that problem \eqref{+max-min} is equivalent to the following one
\begin{eqnarray}\label{+max-min2+}
&&\inf_{p,t,z,s}\; p+s \\
&&\;\mbox{s.t.}\;\max_{\Sigma\in \mathcal{U}_{\Sigma1},\Sigma'\in \mathcal{U}_{\Sigma2}} \nonumber \left\{\frac{w^T\!(x){\Sigma}\,w(x)}{4z}-\psi(\Sigma,\Sigma')\right\}+\frac{t^2}{4z}-s \leq 0,\;z> 0,\\\nonumber
&&\quad\;\;\; \max_{\mu\in \mathcal{U}_{\mu1},\mu'\in \mathcal{U}_{\mu2}}\!\! \left\{b_iw^T\!(x){\mu}\!-\!\varphi(\mu,\mu')\right\}\!-\!p\!+\!a_i(x)\!-\!b_it\!+\!b_i^2 z\!\leq 0 ,\,i=1,2,\cdots, m.
\end{eqnarray}

For fixed $x$, suppose that $(\mu^*,\mu'^*,\Sigma^*,\Sigma'^*,p^*,t^*,z^*,s^*)$ is the optimal solution of \eqref{+max-min}. We also assume that, for some $\mu_1\in \mathcal{U}_{\mu1},
\Sigma_1\in \mathcal{U}_{\Sigma1},
\mu'_1\in \mathcal{U}_{\mu2},\Sigma'_1\in \mathcal{U}_{\Sigma2}$, at least one of the constraints is violated for $(p^*,t^*,z^*,s^*)$, which means one of the following inequalities holds
\begin{eqnarray*}
&& s^*< \frac{(t^*)^2}{4z^*} +\frac{{w}({x})^T{\Sigma_1}{w}({x})}{4z^*}-\psi(\Sigma_1,\Sigma'_1), \\
&& p^*<b_iw^T\!(x){\mu_1}-\varphi(\mu_1,\mu'_1)+a_i(x)-b_it^*+b_i^2 z^*.
\end{eqnarray*}
Suppose that the first inequality holds.  Then we can obtain $s_1=\frac{(t^*)^2}{4z^*} +\frac{{w}({x})^T{\Sigma_1}{w}({x})}{4z^*}-\psi(\Sigma_1,\Sigma'_1)$ such that $(\mu_1,\mu'_1,\Sigma_1,\Sigma'_1,p^*,t^*,z^*,s_1)$ is feasible for \eqref{+max-min}. It follows that
\begin{eqnarray*}
s_1+p^*=\left(\frac{(t^*)^2}{4z^*} +\frac{{w}({x})^T{\Sigma_1}{w}({x})}{4z^*}-\psi(\Sigma_1,\Sigma'_1)+p^*\right)>s^*+p^*,
\end{eqnarray*}
which is a contradiction with the fact that $(\mu^*,\mu'^*,\Sigma^*,\Sigma'^*,p^*,t^*,z^*,s^*)$ is the optimal solution of \eqref{+max-min}. Thus, we have
\begin{eqnarray*}
&& s^*\ge \frac{(t^*)^2}{4z^*} +\frac{{w}({x})^T{\Sigma}{w}({x})}{4z^*}-\psi(\Sigma,\Sigma'),\;\; \forall\, \Sigma\in \mathcal{U}_{\Sigma1},\,\Sigma'\in \mathcal{U}_{\Sigma2}.
\end{eqnarray*}
Similarly, we can prove that
\begin{eqnarray*}
b_iw^T\!(x){\mu}-\varphi(\mu,\mu')-p^*+a_i(x)-b_it^*+b_i^2 z^*\leq 0 ,\,\forall\,\mu\in \mathcal{U}_{\mu1},\,\mu'\in \mathcal{U}_{\mu2},\,i=1,2,\cdots, m.
\end{eqnarray*}
In conclusion, we obtain that problem \eqref{+max-min} is equivalent to problem \eqref{+max-min2+}.
Moreover, by $\Sigma \succeq 0$ and  Lemma 8.4.12 in \cite{B}, \eqref{+max-min2+} can be rewritten as follows:
\begin{eqnarray}\label{+max-min2}
&&\inf_{p,t,z,s,v}\; p+s \\\label{cd1}
&&\;\mbox{s.t.}\;\max_{\Sigma\in \mathcal{U}_{\Sigma1},\Sigma'\in \mathcal{U}_{\Sigma2}} \left\{tr(Q^T\Sigma)-\psi(\Sigma,\Sigma')\right\}+v-s \leq 0,\\\label{cd2}
&&\quad\;\;\; \max_{\mu\in \mathcal{U}_{\mu1},\mu'\in \mathcal{U}_{\mu2}}\!\! \left\{b_iw^T\!(x){\mu}\!-\!\varphi(\mu,\mu')\right\}\!-\!p\!+\!a_i(x)\!-\!b_it\!+\!b_i^2 z\!\leq 0 ,\,\forall\, i=1,2,\cdots, m,\\
&&\quad\;\;\;  Q-\frac{w(x)w^T\!(x)}{4z} \succeq 0,\,t^2-4zv\leq 0, \,z>0.\nonumber
\end{eqnarray}
Next, we focus on the  reformulation of problem \eqref{+max-min2}. By Schur's complement, the positive semidefinite constraint in problem \eqref{+max-min2} can be written as
$$\left(
\begin{array}{cc}
4z &  w^T\!(x)\\
w(x)&Q \\
\end{array} \right)\succeq 0. $$
By introducing new matrices $M_1$ and $M_2$, we reformulate the subordinate maximization problem in constraint \eqref{cd1}  as
\begin{eqnarray*}
&&
\max_{\Sigma\in \mathcal{U}_{\Sigma1},\Sigma'\in \mathcal{U}_{\Sigma2},M_1,M_2}
\left\{tr(Q^T\Sigma)-\psi(M_1,M_2)\,|\,\Sigma=M_1,\Sigma'=M_2\right\}
\nonumber\\
&\!=\!\!\!&\max_{\Sigma\in \mathcal{U}_{\Sigma1},\Sigma'\in \mathcal{U}_{\Sigma2},M_1,M_2}\min_{Y,Y_1}
\left\{tr(Q^T\Sigma)\!-\!\psi(M_1,M_2)
\!+\!tr\!\left(Y_1^T(\Sigma\!-\!M_1)\right)\!+\!tr\!\left(Y^T(\Sigma'\!-\!M_2)\right)\right\}
\nonumber\\
&\!=\!\!\!&\min_{Y,Y_1} \max_{\Sigma\in \mathcal{U}_{\Sigma1},\Sigma'\in \mathcal{U}_{\Sigma2},M_1,M_2}
\left\{tr\!\left(\!\left(Q\!+\!Y_1\right)^T{\Sigma}\!\right)
\!+\!tr\!\left(Y^T\Sigma'\right)
-tr\!\left(Y_1^TM_1\right)\!-\!tr\!\left(Y^TM_2\right)\!-\!\psi(M_1,M_2)\right\}
\\
&\!=\!\!&\!\!\min_{Y,Y_1}\Bigg\{
\underbrace{
\max_{\Sigma\in \mathcal{U}_{\Sigma1}}\!
\left\{tr\!\left(\left(Q+Y_1\right)^T{\Sigma}\!\right)\right\}}
_{h_1\left(Q\!+\!Y_1\! \right)}
\!+\!
\underbrace{\max_{\Sigma'\in \mathcal{U}_{\Sigma2}}\!\!\left\{tr\!\left(Y^T\Sigma'\right)\!\right\}}_
{h_2\left(Y \right)}\\
&&\quad\quad\quad\quad\quad\quad\quad\quad\quad\quad\quad\!+\!
\underbrace{
\max_{M_1,M_2}\!\left\{\!-\!tr\!\left(Y_1^TM_1\right)\!-\!tr\!\left(Y^TM_2\right)\!-\!\psi(M_1,M_2)\right\}}
_{h_3\left(Y,Y_1 \right)}
\Bigg\}.
\end{eqnarray*}
Reformulating the three maximum problems, we have
\begin{eqnarray*}
&&h_1\left(Q+Y_1 \right)=
\max_{\Xi\in Z_{1}}\!
\left\{ tr\left( \left(Q+Y_1\right)^T\!\Sigma_0\right)
+ tr\left(\left(Q+Y_1\right)^T\Xi\right)
\right\}\\
&&\,\quad\qquad\qquad=tr\left( \left(Q+Y_1\right)^T\Sigma_0\right)\!
+\delta^*\left(\left(Q+Y_1\right)|Z_{1}\right),\\
&&h_2\left(Y \right)=
\max_{\Xi\in Z_{2}}\!\!\left\{tr\left(Y^T\Sigma_0\right)+tr\left(Y^T\Xi\right)\,\right\}
=tr(Y^T\Sigma_0)+\delta^*\left(Y|Z_{2}\right),\\
&&h_3\left(Y,Y_1\right)= \max_{M_1,M_2}\!\left\{\!-\!tr\!\left(Y_1^TM_1\right)\!-\!tr\!\left(Y^TM_2\right)\!-\!\psi(M_1,M_2)\right\}
=\psi^*\left(-Y_1;-Y\right).
\end{eqnarray*}
For the third maximum problem, one has
\begin{eqnarray*}
\psi^*\left(-Y_1;-Y\right)\!\!&\geq &\!\!\!\!
\max_{M_1=M_2}\left\{-tr\left(\left(Y_1+Y\right)^TM_1\right)\!-\!\psi(M_1,M_1)\right\}\\
\!\!&=&\!\!\!\max_{M_1} tr\left(-(Y_1+Y)^TM_1\right)\\
\!\!&=&\!\!\!\left\{ \begin{array}{ll} 0,\, & \mbox{if}\, Y_1=-Y,\\ +\infty,\, & \mbox{if} \, Y_1\neq -Y. \end{array} \right.
\end{eqnarray*}
Then we have $Y_1=-Y$ and so problem \eqref{cd1} can be reformulated as
\begin{eqnarray*}
&&
tr\left(Q^T\Sigma_0\right)
+\delta^*\left(\left(Q-Y\!\right)|Z_{1}\right)
+\delta^*\left(Y|Z_{2}\right)+\psi^*\left(Y;-Y\right)+v-s
\leq 0,
\end{eqnarray*}
where$\,Y\in \mathbb{R}^{k\times k} \,$ is the decision variable.

Similarly, the subordinate maximization problem in \eqref{cd2} can be rewritten as follows:
\begin{eqnarray*}
&&\delta^*\big(A\left(b_iw(x)\!-\!\lambda_i\!\right)|U_{1}\big)+\delta^*\left(A\lambda_i|U_{2}\right) +\varphi^*\left(\lambda_i;-\lambda_i\right)\\
&&\qquad\qquad\qquad\quad
+ b_iw^T\!(x)\mu_0
-p+a_i(x)-b_it+b_i^2 z\leq 0 ,\,\forall\,i=1,2,\cdots, m,
\end{eqnarray*}
where $p,z,t$ and $\lambda_i$ are decision variables for $i=1,2,\cdots, m.$
Based on the above discussion, it completes the proof.
\end{proof}

From Theorem \ref{gdrcth1}, we notice that the globalized distributionally robust optimization  with ${\rm(C1)}$ type function under information set \eqref{P} can be rewritten as a convex programming problem. It is easy to see that the computation of system \eqref{th1gdr} involving $\mu$ and $\Sigma$ are all separable, i.e.,  the computation about $\varphi(\cdot;\cdot), \psi(\cdot;\cdot)$ and $\delta^*(\cdot|U_1), \delta^*(\cdot|U_2), \delta^*(\cdot|Z_1), \delta^*(\cdot|Z_2)$ are all independent. Ben-tal et al. \cite{BBHV} presented the globalized robust counterpart can be transformed in a computationally tractable system of inequalities for several choices of functions $\varphi(\cdot;\cdot)$ and convex sets $U_{i}$. For example, they use the distance measure $\varphi$ for two vectors $a$ and $a'$ based on the norm $\|a-a'\|$ and define $\varphi(a,a')=\alpha(\|a-a'\|)$ with $\alpha(\cdot)$ being a nonnegative convex function and $\alpha(0)=0$, they obtain $\varphi^*\left(\lambda;-\lambda\right)=\alpha^*(\|\lambda\|^*)$, where $\|\cdot\|^*$ is the dual norm of $\|\cdot\|$.
More examples about support function and conjugate function on $\mathbb{R}^{k}$ can be found in \cite{BBHV2,BBHV}.
For matrix $\Xi$, if we define
\begin{eqnarray}\label{th1vec+}
Z_{i}\triangleq Z_i(vec(\Xi)), \quad \psi(\Sigma,\Sigma')\triangleq\psi(vec(\Sigma),vec(\Sigma')),\,i=1,2,
\end{eqnarray}
then we can deal with support functions and conjugate functions matrix by employing the methods and results about vector.
By the similar discussion, the reformulation of GDRC \eqref{gdrc1+1} with the case \eqref{th1vec+} can be modified as
\begin{eqnarray}\label{th1gdr+vec}
\left\{
\begin{array}{l}
p+s\le 0,\,t^2-4zv\leq 0,\, z>0,\\
tr\left(Q^T\Sigma_0\right)
+\delta^*\left(\left(vec(Q)-y\!\right)|Z_{1}\right)
+\delta^*\left(y|Z_{2}\right)+\psi^*\left(y;-y\right)+v-s
\leq 0,\\
\delta^*\big(A\left(b_iw(x)\!-\!\lambda_i\!\right)|U_{1}\big)+\delta^*\left(A\lambda_i|U_{2}\right) +\varphi^*\left(\lambda_i;-\lambda_i\right)\\
\qquad\qquad\qquad+ b_iw^T\!(x)\mu_0
-p+a_i(x)-b_it+b_i^2 z\leq 0 ,\,\forall\,i=1,2,\cdots, m,\\
\left(
\begin{array}{cc}
4z &  w^T\!(x)\\
w(x)&Q \\
\end{array} \right)\succeq 0,
\end{array}
\right.
\end{eqnarray}
where $p,s,z,t,v\in \mathbb{R}, x\in \mathbb{R}^n, \lambda_i\in \mathbb{R}^k$, $y \in \mathbb{R}^{k^2}$ and $Q\in \mathbb{R}^{k\times k}$ are decision variables for $i=1,2,\cdots,m$. Actually, it is not difficult to find that the feasible set of \eqref{th1gdr+vec} is also convex. Here we show a computationally tractable example.
\begin{example} Let
\begin{eqnarray*}
&&U_{i}=\left\{\zeta\in \mathbb{R}^k| \|\zeta\|_{1}\leq \theta_i\right\},\,
Z_i=\left\{\Xi\in \mathbb{S}^k|\,\|vec(\Xi)\|_2\le \rho_i \right\},i=1,2,\\
&&\varphi(\mu,\mu')=\beta_1 \|\mu-\mu' \|_{1},\,   \psi(\Sigma,\Sigma')=\frac{\beta_2}{2}\|vec(\Sigma)-vec(\Sigma')\|^2_{2},
\end{eqnarray*}
with $\theta_1\geq \theta_2\geq 0$, $\rho_1\geq \rho_2\geq 0$ and $\beta_1>0,\beta_2>0.$   Then, the support functions and the conjugate functions can be given as follows:
\begin{eqnarray*}
&&\delta^*\big(A\left(b_iw(x)\!-\!\lambda_i\!\right)|U_{1}\big)=\theta_1\|A\left(b_iw(x)\!-\!\lambda_i\!\right)\|_{\infty},\\
&&\delta^*\left(A\lambda_i|{U}_{2}\right)=\theta_2\|A\lambda_i\|_{\infty},\,\forall\, i=1,2,\cdots,m,\\
&&\delta^*\!\left(vec(Q)-y |Z_{1}\!\right)=\|vec(Q)-y\|_2,\\
&&\delta^*\!\left(y|Z_{2}\right)\!=\|y\|_2,\\
&&\varphi^*(\lambda_i;-\lambda_i)=\left\{ \begin{array}{ll}\!\! 0, & \mbox{if} \; \|\lambda_i\|_{\infty}\leq \beta_1,\\ \!\!\infty, & \mbox{otherwise} \,. \end{array} \right.
\\
&&\psi^*\left(y;-y\right)=\frac{1}{2\beta_2} \|y\|^2_2.
\end{eqnarray*}
In this case, \eqref{th1gdr+vec} can be unfolded as
\begin{eqnarray*}
\left\{
\begin{array}{l}
p+s\le 0,\,t^2-4zv\leq 0,\, z>0,\, \|\lambda_i\|_{\infty}\leq \beta_1,\\
tr\left(Q^T\Sigma_0\right)+\|vec(Q)-y\|_2+\|y\|_2
+\frac{1}{2\beta_2} \|y\|^2_2+v-s \leq 0,\\
\theta_1\|A\left(b_iw(x)\!-\!\lambda_i\!\right)\|_{\infty}+\theta_2\|A\lambda_i\|_{\infty} + b_iw^T\!(x)\mu_0
-p+a_i(x)-b_it+b_i^2 z\leq 0 ,\,\forall\, i=1,2,\cdots, m,\\
\left(
\begin{array}{cc}
4z &  w^T\!(x)\\
w(x)&Q \\
\end{array} \right)\succeq 0,
\end{array}
\right.
\end{eqnarray*}
where $p,s,z,t,v\in \mathbb{R}, x\in \mathbb{R}^n, \lambda_i\in \mathbb{R}^k$, $y \in \mathbb{R}^{k^2}$ and $Q\in \mathbb{R}^{k\times k}$ are decision variables for $i=1,2,\cdots,m$.
\end{example}

However, since the structure of the matrix may be destroyed when the matrix has been transformed into the vector form, it is
worth considering the conjugate and support functions on $\mathbb{R}^{k\times k}$ directly. Some computationally tractable examples for support functions on $\mathbb{R}^{k\times k}$ can be found in \cite{DHX}. On the other hand, it is difficult to obtain the computationally tractable forms for conjugate function on $\mathbb{R}^{k\times k}$. Next we will show two special conjugate functions on $\mathbb{R}^{k\times k}$ and the corresponding reformulations \eqref{th1gdr} can be unfolded in a computationally tractable way.

\begin{example}
Let
\begin{eqnarray*}
&&U_{i}=\left\{\zeta\in \mathbb{R}^k|\,\|\zeta\|_{p}\leq \rho_i \right\},\;i=1,2,\\
&&Z_1=\left\{\Xi\in \mathbb{S}^k|\,0\preceq\Xi\preceq \theta_1\Xi_0 \right\},\,
Z_2=\left\{\Xi\in \mathbb{S}^k|\,0\preceq\Xi\preceq \theta_2\Xi_0, tr(\Xi D \Xi)\leq \tau \right\},\\
&&\varphi(\mu,\mu')=\frac{\beta_1}{2}(\mu-\mu')^T\Sigma^{-1}_0(\mu-\mu'),\,
\psi(\Sigma,\Sigma')=tr\left(\left(\Sigma-\Sigma'\right)^T\!\!P_1\left(\Sigma-\Sigma'\right)P_2 \right),
\end{eqnarray*}
where $\beta_1> 0, \tau\geq 0,\,\rho_1\geq\rho_2\geq 0, \theta_1\geq \theta_2\geq 0, p\geq 1$ and $\Xi_0\succeq0,D\succ 0, P_1\succ 0, P_2\succ 0$.
The support functions for $U_i$ and $Z_i$ are as follows:
\begin{eqnarray*}
&&\delta^*\big(A\left(b_iw(x)\!-\!\lambda_i\!\right)|U_{1}\big)=\rho_1\|A\left(b_iw(x)\!-\!\lambda_i\!\right)\|_{q},\\
&&\delta^*\left(A\lambda_i|{U}_{2}\right)=\rho_2\|A\lambda_i\|_{q},\,i=1,2,\cdots,m,\\
&&\delta^*\!\left(\!\left(Q-Y\right)\Big|Z_{1}\!\right)=
\min_{H_1}\left\{\theta_1 tr(H_1^T\,\Xi_0)\,\Big|H_1-Q+Y\succeq 0,H_1\succeq0 \right\},  \\
&&\delta^*\!\left(Y|Z_{2}\right)\!=\!\!\!\min_{V_1,V_2,H_2}\!\!\!\left\{\theta_2 tr(H_2^T\,\Xi_0)+\sqrt{\tau}\!\sqrt{tr(V_2D^{-1}V_2)}\,\big|\,V_1+V_2=Y,\,H_2-V_1\succeq 0,H_2\succeq0\right\},
\end{eqnarray*}
where $\|\cdot\|_{q}$ is the dual norm of $\|\cdot\|_{p}$ with $\frac{1}{p}+\frac{1}{q}=1$.  Furthermore, one has
\begin{eqnarray*}
\psi^*\left(Y;-Y\right)=\frac{1}{4}tr(Y^TP_1^{-1}YP_2^{-1}).
\end{eqnarray*}
Taking $P_1=\beta_2 E$ and $P_2=E$, we have $\psi(\Sigma,\Sigma')=\beta_2\|\Sigma-\Sigma'\|^2_{F}$
and so $\psi^*\left(Y;-Y\right)=\frac{1}{4\beta_2}tr(Y^T\!Y).$
Moreover, it is easy to check that
$$
\varphi^*\left(\lambda_i;-\lambda_i\right)=\frac{1}{2\beta_1}\lambda^T_i\Sigma_0 \lambda_i,\,\forall\, i=1,2,\cdots, m.
$$
In summarise, GDRC \eqref{th1gdr} can be rewritten as
\begin{eqnarray*}
\left\{
\begin{array}{l}
p+s\le 0,\,t^2-4zv\leq 0,\, z>0,\\
V_1+V_2=Y,\,H_2-V_1\succeq 0,\,H_1-Q+Y\succeq 0,\,H_1\succeq0,\,H_2\succeq0,\\
tr\left(Q^T\Sigma_0\right)
+tr\left(\left(\theta_1H_1+\theta_2 H_2\right)^T\!\Xi_0\right)\!+\!\sqrt{\tau}\!\sqrt{tr(V_2D^{-1}V_2)}+\frac{1}{4}tr(YP_1^{-1}YP_2^{-1})+v-s
\leq 0,\\
\rho_1\|A\left(b_iw(x)\!-\!\lambda_i\!\right)\|_{q}+\rho_2\|A\lambda_i\|_{q} +\frac{1}{2\beta_1}\lambda^T_i\Sigma_0 \lambda_i\\
\qquad\qquad\qquad+ b_iw^T\!(x)\mu_0
-p+a_i(x)-b_it+b_i^2 z\leq 0 ,\,\forall\, i=1,2,\cdots, m,\\
\left(
\begin{array}{cc}
4z &  w^T\!(x)\\
w(x)&Q \\
\end{array} \right)\succeq 0,
\end{array}
\right.
\end{eqnarray*}
where $z,s,t,p,v\in \mathbb{R}$, $x\in \mathbb{R}^n$, $\lambda_i \in \mathbb{R}^k$, and $H_1, H_2, V_1, V_2,Y\in \mathbb{R}^{k\times k}$ are decision variables for $i=1,2,\cdots,m$.

\end{example}

\begin{example}
Let
\begin{eqnarray*}
&&U_{1}=\left\{\zeta\in \mathbb{R}^n| \|\zeta\|_{2}\leq \rho_1\right\},\,U_{2}=\left\{\zeta\in \mathbb{R}^n| \|\zeta\|_{2}\leq \rho_2,\, \|\zeta\|_{1}\leq \rho_3 \right\},\\
&&Z_1=\left\{\Xi\in \mathbb{S}^n|\, \|\Xi\|_{F}\leq \theta_1  \right\},\,
Z_2=\left\{\Xi\in \mathbb{S}^n|\,\|\Xi\|_{F}\leq \theta_2,\|\Xi\|_{1\sigma}\leq \tau  \right\},\\
&&\varphi(\mu,\mu')=\beta_1 \|\mu-\mu' \|_{2},\,   \psi(\Sigma,\Sigma')=\beta_2\ln det(\Sigma-\Sigma'+E)^{-1},
\end{eqnarray*}
where $\beta_1> 0, \beta_2> 0, \rho_3\geq 0, \tau\geq 0, \rho_1\geq\rho_2\geq 0, \theta_1\geq \theta_2\geq 0$. Then it is easy to find that the support functions for $U_i$ and $Z_i$ have the following forms:
\begin{eqnarray*}
&&\delta^*\big(A\left(b_iw(x)\!-\!\lambda_i\!\right)|U_{1}\big)= \rho_1\|A\left(b_iw(x)\!-\!\lambda_i\!\right)\|_{2},\\
&&\delta^*\left(A\lambda_i|{U}_{2}\right)=\min_{u_{i1},u_{i2}}\left\{\rho_2\|u_{i1}\|_{2}+\rho_3 \|u_{i2}\|_{\infty} |\,u_{i1}+u_{i2}=A\lambda_i   \right\},\,\forall\,i=1,2,\cdots,m,\\
&&\delta^*\!\left(Q-Y |Z_{1}\right)=
\theta_1\left\|Q-Y\right\|_{F},  \\
&&\delta^*\!\left(Y|Z_{2}\right)=\min_{V_1,V_2}\left\{\theta_2\|V_1\|_{F}+\tau \delta_{\max}(V_2)|\,V_1+V_2=Y  \right\}.
\end{eqnarray*}
Moreover, we have
\begin{eqnarray*}
\varphi*(\lambda_i;-\lambda_i)&\!\!\!=\!\!\!&\left\{ \begin{array}{ll}\!\! 0, & \mbox{if} \; \|\lambda_i\|_{2}\leq \beta_1,\\ \!\!\infty, & \mbox{otherwise} \,. \end{array} \right.
\\
\psi^*\left(Y;-Y\right)
&\!\!\!=\!\!\!&\left\{ \begin{array}{ll}\!\! \beta_2\ln det(-Y)^{-1}\!+\!\beta_2(\ln\beta_2-1)n\!-\!tr(Y), & \mbox{if} \; Y \prec 0,\\ \!\!\infty, & \mbox{otherwise} \,. \end{array} \right.
\end{eqnarray*}
Let $H=-Y$. Then GDRC \eqref{th1gdr} can be rewritten as
\begin{eqnarray*}
\left\{
\begin{array}{l}
p+s\le 0,\,t^2-4zv\leq 0,\, z>0,\\
tr\left(Q^T\Sigma_0\right)
+\theta_1\left\|Q+H\right\|_{F}
+\theta_2\|V_1\|_{F}+\tau \delta_{\max}(V_2)\\
\qquad\qquad\qquad +\beta_2\ln det(H)^{-1}+\!\beta_2(\ln\beta_2-1)n\!+\!tr(H)+v-s
\leq 0,\\
 \rho_1\|A\left(b_iw(x)\!-\!\lambda_i\!\right)\|_{2}+\rho_2\|u_{i1}\|_{2}+\rho_3 \|u_{i2}\|_{\infty} \\
\qquad\qquad\qquad+ b_iw^T\!(x)\mu_0
-p+a_i(x)-b_it+b_i^2 z\leq 0 ,\,\forall\, i=1,2,\cdots, m,\\
V_1+V_2+H=0,\,H\succ 0 ,\,u_{i1}+u_{i2}=A\lambda_i,\,\|\lambda_i\|_{2}\leq \beta_1,\,\forall\, i=1,2,\cdots, m,\\
\left(
\begin{array}{cc}
4z &  w^T\!(x)\\
w(x)&Q \\
\end{array} \right)\succeq 0,
\end{array}
\right.
\end{eqnarray*}
where $z,s,t,p,v\in \mathbb{R}$, $x\in \mathbb{R}^n$, $u_{i1},u_{i2},\lambda_i \in\mathbb{R}^k$, $Q,V_1,V_2,H\in \mathbb{R}^{k\times k}$ are decision variables for $i=1,2,\cdots,m$.
\end{example}

Linear inequality has been widely used in optimization field. Next, we present a more tight reformulation for the globalized distributionally robust linear constraint and we find the reformulation do not contain the information of covariance.
\begin{cor}
Let $g(\xi,x)=a(x)+w^T\!(x)\xi$, where $a(x)$ and $w(x)$ are affine in $x$. Then GDRC \eqref{gdrc1+1} can be rewritten as follows:
\begin{eqnarray}\label{cor31+}
a(x)+w^T\!(x)\mu_0+\delta^*\big(A\left(w(x)\!-\!\lambda\!\right)|U_{1}\big)+\delta^*\left(A\lambda|{U}_{2}\right) +\varphi^*\left(\lambda;-\lambda\right)\leq 0.
\end{eqnarray}
where $\lambda\in \mathbb{R}^k,$ $x\in \mathbb{R}^n$ are decision variables.
Moreover, the feasible set of inequality \eqref{cor31+} is convex.
\end{cor}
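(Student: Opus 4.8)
The plan is to specialize the proof of Theorem~\ref{gdrcth1} to the case $m=1$, $a_1(x)=a(x)$, $b_1=1$, while exploiting the fact that an affine integrand makes the inner subordinate expectation independent of the covariance, so that the whole $\Sigma$-block of system \eqref{th1gdr} collapses. First I would note that $E_{P\sim(\mu,\Sigma)}[a(x)+w^T\!(x)\xi]=a(x)+w^T\!(x)\mu$ depends on $P$ only through its mean $\mu$, so that GDRC \eqref{gdrc1+1} is equivalent to
\[\sup_{\mu\in\mathcal{U}_{\mu1},\,\Sigma\in\mathcal{U}_{\Sigma1}}\Big\{a(x)+w^T\!(x)\mu-\min_{\mu'\in\mathcal{U}_{\mu2}}\varphi(\mu,\mu')-\min_{\Sigma'\in\mathcal{U}_{\Sigma2}}\psi(\Sigma,\Sigma')\Big\}\le 0.\]
Since the bracketed objective is additively separable in $\mu$ and $\Sigma$, this supremum splits into the sum of a $\mu$-supremum and a $\Sigma$-supremum.

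Next I would dispose of the $\Sigma$-supremum. Since $\psi\ge 0$ we always have $-\min_{\Sigma'\in\mathcal{U}_{\Sigma2}}\psi(\Sigma,\Sigma')\le 0$, while any $\Sigma\in\mathcal{U}_{\Sigma2}$ gives $\min_{\Sigma'\in\mathcal{U}_{\Sigma2}}\psi(\Sigma,\Sigma')\le\psi(\Sigma,\Sigma)=0$; such $\Sigma$ is admissible because $\mathcal{U}_{\Sigma2}\subset\mathcal{U}_{\Sigma1}$, which follows from $Z_2\subset Z_1$. Hence $\sup_{\Sigma\in\mathcal{U}_{\Sigma1}}\{-\min_{\Sigma'\in\mathcal{U}_{\Sigma2}}\psi(\Sigma,\Sigma')\}=0$, the covariance contribution vanishes, and GDRC \eqref{gdrc1+1} reduces to
\[\max_{\mu\in\mathcal{U}_{\mu1},\,\mu'\in\mathcal{U}_{\mu2}}\big\{w^T\!(x)\mu-\varphi(\mu,\mu')\big\}+a(x)\le 0.\]

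The remaining step is to rewrite this mean-only constraint, which is precisely the treatment given to constraint \eqref{cd2} in the proof of Theorem~\ref{gdrcth1} with $b_i=1$: introduce auxiliary vectors $m_1=\mu$, $m_2=\mu'$, dualize the two equalities with multipliers $\lambda_1,\lambda$, swap the resulting maximization and minimization, and break the inner maximum into $\delta^*(w(x)+\lambda_1\,|\,\mathcal{U}_{\mu1})+\delta^*(\lambda\,|\,\mathcal{U}_{\mu2})+\varphi^*(-\lambda_1;-\lambda)$. Taking $m_1=m_2$ together with $\varphi(m,m)=0$ shows $\varphi^*(-\lambda_1;-\lambda)=+\infty$ unless $\lambda_1=-\lambda$, which forces $\lambda_1=-\lambda$; then, using $\mathcal{U}_{\mu i}=\{\mu_0+A\zeta:\zeta\in U_i\}$ and $A=A^T$, one has $\delta^*(y\,|\,\mathcal{U}_{\mu i})=y^T\mu_0+\delta^*(Ay\,|\,U_i)$, so the $\mu_0$-terms combine to $w^T\!(x)\mu_0$ and the maximum becomes $w^T\!(x)\mu_0+\delta^*(A(w(x)-\lambda)\,|\,U_1)+\delta^*(A\lambda\,|\,U_2)+\varphi^*(\lambda;-\lambda)$, minimized over $\lambda$. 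Requiring that this plus $a(x)$ be nonpositive for some $\lambda$ is exactly \eqref{cor31+}. I expect the genuinely delicate point to be the interchange of the maximization over $(\mu,\mu',m_1,m_2)$ with the minimization over $(\lambda_1,\lambda)$; this I would justify by compactness and convexity of $\mathcal{U}_{\mu2}$, convexity of $\varphi$, and Lemma~\ref{Fenchel} (or a Sion-type minimax theorem), as implicitly used in Theorem~\ref{gdrcth1}; everything else is the bookkeeping already carried out there.

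Finally, for convexity of the feasible set of \eqref{cor31+} in $(x,\lambda)$: $a(x)$ and $w^T\!(x)\mu_0$ are affine in $x$; $\delta^*(A(w(x)-\lambda)\,|\,U_1)$ and $\delta^*(A\lambda\,|\,U_2)$ are support functions, hence convex, composed with maps affine in $(x,\lambda)$; and $\varphi^*(\lambda;-\lambda)$ is the jointly convex conjugate $\varphi^*(\cdot;\cdot)$ evaluated at a linear image of $\lambda$, hence convex in $\lambda$. Therefore the left-hand side of \eqref{cor31+} is a jointly convex function of $(x,\lambda)$, so its $0$-sublevel set, i.e.\ the feasible set of \eqref{cor31+}, is convex.
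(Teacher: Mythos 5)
Your proposal is correct and follows essentially the same route as the paper: split the supremum into its $\mu$- and $\Sigma$-parts, kill the covariance contribution using $\psi\ge 0$ together with $\mathcal{U}_{\Sigma2}\subset\mathcal{U}_{\Sigma1}$, and then reformulate the mean-only supremum exactly as in the treatment of constraint \eqref{cd2} in Theorem \ref{gdrcth1}. The only (harmless) deviation is that you obtain $E_P[a(x)+w^T\!(x)\xi]=a(x)+w^T\!(x)\mu$ directly by linearity of expectation, whereas the paper re-derives it by passing through the dual of the moment problem and minimizing out $p,s,t,z$; both yield the same reduction.
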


\begin{proof}
When $m=1$, by the similar proof in Theorem \ref{gdrcth1}, the inner subordinate expectation problem $\sup_{{P}\in
\mathcal{P}_{11}}E_{P}\left[a(x)+w^T\!(x)\xi\right]$  is equivalent to
\begin{eqnarray*}
&&\inf_{p,t,z,s}\;\; p+s\\\nonumber
&&\;\mbox{s.t.}\quad\; \frac{w^T\!(x){\Sigma}\,w(x)+t^2}{4z}-s \leq 0,\;z\ge 0,\\\nonumber
&&\quad\quad\;\;\; w^T\!(x){\mu}-p+a(x)-t+ z\leq 0.
\end{eqnarray*}
It follows that
\begin{eqnarray*}
\inf_{p,t,z,s}\left\{ p+s\right\}\!\!\!\!\!&=&\!\!\!\!\!\inf_{t,z}\left\{a(x)+w^T\!(x){\mu}+ z+\frac{w^T\!(x){\Sigma}\,w(x)}{4z}+\frac{t^2}{4z}-t\right\}\\
&=& \inf_{z}\left\{a(x)+w^T\!(x){\mu}+ z+\frac{w^T\!(x){\Sigma}\,w(x)}{4z}- z\right\}\\
&=& a(x)+w^T\!(x){\mu}.
\end{eqnarray*}
Then, \eqref{gdrc2} with $m=1$ can be reformulated as
\begin{eqnarray*}
&&\sup_{
\mu\in \mathcal{U}_{\mu1},
\mu'\in \mathcal{U}_{\mu2}}\!\!\!
\left\{
a(x)+w^T\!(x)\mu
-\varphi(\mu,\mu')\}\right\}+\!\!\!\!
\sup_{
\Sigma\in \mathcal{U}_{\Sigma1},
\Sigma'\in \mathcal{U}_{\Sigma2}
}\!\!\!
\left\{
- \psi(\Sigma,\Sigma')\}\right\}
\leq 0.
\end{eqnarray*}
From $\psi(\Sigma,\Sigma')\ge 0$, it follows that $\sup_{
\Sigma\in \mathcal{U}_{\Sigma1},
\Sigma'\in \mathcal{U}_{\Sigma2}
}\!\!\!
\left\{
- \psi(\Sigma,\Sigma')\}\right\}=0$.
By the similar discussion in Theorem \ref{gdrcth1}, the globalized distributionally robust linear constraint \eqref{gdrc1}
can be rewritten as
\begin{eqnarray*}
a(x)+w^T\!(x)\mu_0+\delta^*\big(A\left(w(x)\!-\!\lambda\!\right)|U_{1}\big)+\delta^*\left(A\lambda|{U}_{2}\right) +\varphi^*\left(\lambda;-\lambda\right)\leq 0.
\end{eqnarray*}
This ends the proof. \end{proof}

Since the pointwise supremum of an arbitrary collection of convex function is convex, we notice that the feasible set of constraint \eqref{cor31+} is also convex when $a(x)$ and $w(x)$ are convex in $x$.

The ``normal range" set usually can be viewed as the set of the most possible realizations. A special case is that the inner ``normal range" set of realizations only has one element which is the empirical estimation, i.e., $\mathcal{U}_{\mu2}=\{\mu_0\}$ and $\mathcal{U}_{\Sigma2}=\{\Sigma_0\}$. Define 
$$\min_{\mu'\in \mathcal{U}_{\mu2}} \varphi(\mu,\mu')=\varphi(\mu,\mu_0)\triangleq \varphi(\mu),\quad
\min_{\Sigma'\in \mathcal{U}_{\Sigma2}} \psi(\Sigma,\Sigma')=\psi(\Sigma,\Sigma_0)\triangleq \psi(\Sigma).$$
Then it is easy to see that $\varphi(\mu)$ and $\psi(\Sigma)$  are two nonnegative convex functions with $\varphi(\mu_0)=0$ and $\psi(\Sigma_0)=0$.

In addition, we require that $\varphi(\mu)$ and $\psi(\Sigma)$  are lower semi-continuous.  Then we have the following corollary.
\begin{cor}\label{cor1}
When $\mathcal{U}_{\mu2}=\{\mu_0\}$ and $\mathcal{U}_{\Sigma2}=\{\Sigma_0\}$, GDRC \eqref{gdrc1+1} can be equivalently reformulated as the following system of inequalities
\begin{eqnarray}\label{fix}
\left\{
\begin{array}{l}
p+s\leq 0,\,t^2-4zv\leq 0,\,z>0,\\
\delta^*(Y|Z_{1})+\psi^*\left(Q-Y\right)+ tr(Y^T\Sigma_0)+v-s \leq 0,\\
\delta^*(A\lambda_i|U_{1})+\varphi^*\left(b_iw(x)-\lambda_i\right)+\lambda_i^T\mu_0
-p+a_i(x)-b_it+b_i^2 z\leq 0 ,\,\forall\, i=1,2,\cdots, m,\\
\left(
\begin{array}{cc}
4z &  w^T\!(x)\\
w(x)&Q \\
\end{array} \right)\succeq 0,
\end{array}
\right.
\end{eqnarray}
where $p,s,z,t,v\in \mathbb{R}, x\in \mathbb{R}^n$, $\lambda_i\in \mathbb{R}^k$, and $Y,Q\in \mathbb{R}^{k\times k}$
are decision variables for $i=1,2,\cdots,m$. Moreover, the feasible set of system \eqref{fix} is convex.
\end{cor}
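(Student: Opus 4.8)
The plan is to re-run the argument of Theorem \ref{gdrcth1} in the degenerate case $\mathcal{U}_{\mu2}=\{\mu_0\}$, $\mathcal{U}_{\Sigma2}=\{\Sigma_0\}$, where the inner minimizations over $\mathcal{U}_{\mu2}$ and $\mathcal{U}_{\Sigma2}$ collapse to the single-variable functions $\varphi(\mu)=\varphi(\mu,\mu_0)$ and $\psi(\Sigma)=\psi(\Sigma,\Sigma_0)$, which are nonnegative, convex, lower semicontinuous, with $\varphi(\mu_0)=0$, $\psi(\Sigma_0)=0$. Then \eqref{gdrc1+1} reads $E_{P\sim(\mu,\Sigma)}[g(x,\xi)]\le\varphi(\mu)+\psi(\Sigma)$ for all $P\in\mathcal{P}_1$, i.e. $\sup_{P\in\mathcal{P}_1}\{E_P[\max_i\{a_i(x)+b_iw^T(x)\xi\}]-\varphi(\mu)-\psi(\Sigma)\}\le 0$. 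First I would reproduce verbatim the reductions of Theorem \ref{gdrcth1} from \eqref{gdrc2} through \eqref{+max-min2}: split the supremum over $\mathcal{P}_1$ via the auxiliary set $\mathcal{P}_{11}(\mathbb{R}^k,\mu,\Sigma)$; solve the inner moment problem \eqref{org2+} through its dual by strong duality; substitute $\gamma_0,\gamma_1,\gamma_2$ in terms of $(p,t,z)$; re-introduce $p,s$ so that $\mu$ and $\Sigma$ end up in separate constraints; and use $\Sigma\succeq 0$ together with Lemma 8.4.12 of \cite{B} and Schur's complement to introduce $Q,v$ and the block-PSD constraint. The only bookkeeping change is that $\varphi(\mu),\psi(\Sigma)$ replace $\varphi(\mu,\mu'),\psi(\Sigma,\Sigma')$ and the variables $\mu',\Sigma'$ are absent throughout; the sup--inf exchange between the analogs of \eqref{+max-min} and \eqref{+max-min2} is justified by exactly the optimality/contradiction argument used there. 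The outcome is: GDRC \eqref{gdrc1+1} holds iff there exist $p,s,t,z,v$ with $z>0$, $t^2-4zv\le 0$, the block matrix $\left(\begin{smallmatrix}4z & w^T(x)\\ w(x)& Q\end{smallmatrix}\right)\succeq 0$, $\max_{\Sigma\in\mathcal{U}_{\Sigma1}}\{tr(Q^T\Sigma)-\psi(\Sigma)\}+v-s\le 0$, and, for each $i$, $\max_{\mu\in\mathcal{U}_{\mu1}}\{b_iw^T(x)\mu-\varphi(\mu)\}-p+a_i(x)-b_it+b_i^2z\le 0$, and $p+s\le 0$.

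It remains to dualize the two inner maximizations, which is where the single-variable conjugates $\psi^*$ and $\varphi^*$ enter (in place of the joint conjugates of Theorem \ref{gdrcth1}). For the covariance term I would introduce a matrix variable $M_1$ with $\Sigma=M_1$ and multiplier $Y_1$, exchange $\max$ and $\min$ by Fenchel duality (Lemma \ref{Fenchel}; legitimate since $\psi$ is proper convex l.s.c. and $\Sigma_0\in\mathcal{U}_{\Sigma1}\cap\mathrm{dom}(\psi)$ where $\psi$ is finite), and split the resulting maximum, using $\mathcal{U}_{\Sigma1}=\Sigma_0+Z_1$, as $\delta^*(Q+Y_1|\mathcal{U}_{\Sigma1})+\max_{M_1}\{-tr(Y_1^TM_1)-\psi(M_1)\}=tr((Q+Y_1)^T\Sigma_0)+\delta^*(Q+Y_1|Z_1)+\psi^*(-Y_1)$. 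Setting $Y=Q+Y_1$ turns the covariance constraint into the existence of $Y\in\mathbb{R}^{k\times k}$ with $\delta^*(Y|Z_1)+\psi^*(Q-Y)+tr(Y^T\Sigma_0)+v-s\le 0$. For the mean term, since $\mathcal{U}_{\mu1}=\mu_0+AU_1$ with $A$ symmetric, the identical argument (multiplier $\lambda_i$ for $\mu=m_i$) gives $\max_{\mu\in\mathcal{U}_{\mu1}}\{b_iw^T(x)\mu-\varphi(\mu)\}=\min_{\lambda_i}\{(b_iw(x)+\lambda_i)^T\mu_0+\delta^*(A(b_iw(x)+\lambda_i)|U_1)+\varphi^*(-\lambda_i)\}$; after the substitution $\lambda_i\leftarrow b_iw(x)+\lambda_i$ this becomes $\min_{\lambda_i}\{\lambda_i^T\mu_0+\delta^*(A\lambda_i|U_1)+\varphi^*(b_iw(x)-\lambda_i)\}$, so the $i$-th constraint becomes the existence of $\lambda_i\in\mathbb{R}^k$ with $\delta^*(A\lambda_i|U_1)+\varphi^*(b_iw(x)-\lambda_i)+\lambda_i^T\mu_0-p+a_i(x)-b_it+b_i^2z\le 0$. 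Gathering all the inequalities gives precisely system \eqref{fix}.

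For the convexity claim I would observe that $\delta^*(\cdot|Z_1)$, $\delta^*(\cdot|U_1)$, $\psi^*$, $\varphi^*$ are convex, hence $\delta^*(Y|Z_1)$, $\psi^*(Q-Y)$, $tr(Y^T\Sigma_0)$, $\delta^*(A\lambda_i|U_1)$, $\varphi^*(b_iw(x)-\lambda_i)$ and the affine terms $\lambda_i^T\mu_0$, $b_iw^T(x)\mu_0$, $a_i(x)$, $b_it$, $b_i^2z$ are convex in the decision variables (with $a_i$, $w$ affine); the constraint $t^2-4zv\le 0$ together with $z>0$ is convex (perspective of $t\mapsto t^2$); and the block matrix is affine in $(z,x,Q)$, so the semidefinite constraint is convex. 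Thus \eqref{fix} is an intersection of convex sets. The step I expect to be the main obstacle is not the algebra but the rigour of the two min--max exchanges: verifying the qualification hypotheses of Lemma \ref{Fenchel} for the possibly unbounded sets $\mathcal{U}_{\mu1},\mathcal{U}_{\Sigma1}$, and confirming strong duality for the inner moment problem \eqref{org2+}; beyond that the corollary is a routine specialization of Theorem \ref{gdrcth1}.
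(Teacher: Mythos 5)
Your proposal is correct and follows essentially the same route as the paper: reduce to the intermediate problem with the two inner maximizations $\max_{\Sigma\in\mathcal{U}_{\Sigma1}}\{tr(Q^T\Sigma)-\psi(\Sigma)\}$ and $\max_{\mu\in\mathcal{U}_{\mu1}}\{b_iw^T\!(x)\mu-\varphi(\mu)\}$ exactly as in Theorem \ref{gdrcth1}, then dualize each via Lemma \ref{Fenchel} to produce $\delta^*(Y|Z_1)+\psi^*(Q-Y)+tr(Y^T\Sigma_0)$ and $\delta^*(A\lambda_i|U_1)+\varphi^*(b_iw(x)-\lambda_i)+\lambda_i^T\mu_0$. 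Your variable-splitting/multiplier presentation of the Fenchel step is only a cosmetic variant of the paper's direct invocation of the lemma, and your closing remarks about the qualification hypotheses flag gaps the paper itself leaves implicit.
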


\begin{proof}
By the similar discussion with the proof in Theorem \ref{gdrcth1}, GDRC \eqref{gdrc1+1}
in this case is equivalent to
\begin{eqnarray}
&&\inf_{p,t,z,s}\; p+s \nonumber\\\label{cd21}
&&\;\mbox{s.t.}\;\max_{\Sigma\in \mathcal{U}_{\Sigma1}} \left\{tr(Q^T\Sigma)-\psi(\Sigma)\right\}+v-s \leq 0,\\\label{cd22}
&&\quad\;\;\; \max_{\mu\in \mathcal{U}_{\mu1}} \left\{b_iw^T\!(x){\mu}-\varphi(\mu)\right\}-p+a_i(x)-b_it+b_i^2 z\leq 0 ,\,i=1,2,\cdots, m.\\ \nonumber
&&\quad\;\;\;\left(
\begin{array}{cc}
4z &  w^T\!(x)\\
w(x)&Q \\
\end{array} \right)\succeq 0,\,t^2-4zv\leq 0,\, z>0,
\end{eqnarray}

By Lemma \ref{Fenchel}, one has
\begin{eqnarray*}
&&\max_{\Sigma\in \mathcal{U}_{\Sigma1}} \left\{tr(Q^T\Sigma)-\psi(\Sigma)\right\}\\
&=&\!\!\max_{\Sigma} \left\{tr\left(Q^T{\Sigma}\right)-\psi(\Sigma)-\delta(\Sigma|\mathcal{U}_{\Sigma1}) \right\}\\
&=&\!\!\min_{Y} \left\{ \delta^*(Y|\mathcal{U}_{\Sigma1})-\left[tr\left(Q^TY\right)-\psi(Y)\right]_*
\right\},
\end{eqnarray*}
where
\begin{eqnarray*}
\left[tr\left(Q^TY\right)-\psi(Y)\right]_*\!\!\!\!&=&\!\!\!
\inf_{\Sigma}\left\{tr\left(Y^T\Sigma\right)-\left[tr\left(Q^T\Sigma\right)-\psi(\Sigma)\right]               \right\}\\
&=&\!\!\!  \inf_{\Sigma}\left\{ tr\!\!\left(\!\left(Y-Q\right)^T\!\Sigma\right)+\psi(\Sigma)\right\}\\
&=&\!\!\! -\sup_{\Sigma}\left\{tr\left( \left(Q-Y\right)^T\!\Sigma\right)-\psi(\Sigma)\right\}\\
&=&\!\!\!-\psi^*\left(Q-Y\right).
\end{eqnarray*}
By the discussion, constraint \eqref{cd21} holds if and only if $x,t,z,s$ together with variable $Y$ satisfy
\begin{eqnarray*}
\delta^*(Y|Z_{1})+\psi^*\left(Q-Y\right)+ tr(Y^T\Sigma_0)+v-s \leq 0.
\end{eqnarray*}

Similarly, the system of constraints \eqref{cd22} are equivalent to
\begin{eqnarray*}
\delta^*(A\lambda_i|U_{1})+\varphi^*\left(b_iw^T\!(x)-\lambda_i\right)+\lambda_i^T\mu_0
-p+a_i(x)-b_it+b_i^2 z\leq 0 ,\,\forall\, i=1,2,\cdots, m.
\end{eqnarray*}
This ends the proof.
\end{proof}

Actually, we notice that the conjugate functions $\psi^*\left(Y;-Y\!\right)$ with respect to both two variables has a few computationally tractable examples. When the inner ``normal range" set of realizations is described by the empirical estimation, the reformulation \eqref{fix} of the GDRC do not need to compute the convex conjugate function with respect to both two variables, which may have a computational advantage in comparison with these cases in which the inner ``normal range" sets have more than one element. Next we show a computationally tractable example for \eqref{fix}.

\begin{example}
Let
\begin{eqnarray*}
&&U_{1}=\left\{\zeta\in \mathbb{R}^k|\,h_{l}(\zeta)\leq 0,\,l=1,2,\cdots,L \right\},\\
&&Z_1=\left\{\Xi\in \mathbb{S}^k|\,tr(C_j\,\Xi)\leq c_j,\,j=1,2,\cdots,J\right\},\\
&&\varphi(\mu)=\sum_{\kappa=1}^{k}\mu_{\kappa}\ln(\frac{\mu_\kappa}{\mu_{0\kappa}}),\\
&&\psi(\Sigma)=
\left\{ \begin{array}{ll} \beta_2\left(tr(\Sigma^{-1}_0\Sigma)-k\right),\, & \mbox{if}\,\, \Sigma\succeq \Sigma_0,\\ 0,\, & otherwise. \end{array} \right.
\end{eqnarray*}
where $h_l(\cdot)$ is convex. In addition, if we assume that $\mu>0$, then
\begin{eqnarray*}
&&\delta^*(A\lambda_i|U_{1})=\min_{\theta_i}\left\{\sum^L_{l=1}\theta_{il} h^*_l\left(\frac{u_{il}}{\theta_{il}}\right)\,\Big|\theta_i\geq 0,\,\sum^L_{l=1}u_{il}=A\lambda_i \right\},\\
&&\delta^*\!\left(Y|Z_{1}\!\right)=
\min_{\eta_j}\left\{\sum_{j=1}^J c_j\eta_j  \Big|\sum_{j=1}^J \eta_jC_j=Y,\,\eta\geq 0 \right\},  \\
&&\varphi^*\left(b_iw(x)-\lambda_i\right)=\sum_{\kappa=1}^{k} \mu_{0i}e^{b_iw_\kappa(x)-\lambda_{i\kappa}-1},\\
&&\psi^*\left(Q-Y\right)=
\left\{ \begin{array}{ll}\!\! \beta_2k, & \mbox{if} \; \beta_2\Sigma^{-1}_0+Y-Q \succeq 0,\\ \!\!\infty, & \mbox{otherwise} \,. \end{array} \right.
\end{eqnarray*}
Thus, GDRC \eqref{fix} can be rewritten as
\begin{eqnarray*}
\left\{
\begin{array}{l}
p+s\leq 0,\,t^2-4zv\leq 0,\,z>0,\sum_{j=1}^J \eta_jC_j=Y,\,\eta\geq 0\\
\sum_{j=1}^J c_j\eta_j +\beta_2k+ tr(Y^T\Sigma_0)+v-s \leq 0,\\
\sum^L_{l=1}\theta_{il} h^*_l\left(\frac{u_{il}}{\theta_{il}}\right)+\sum_{\kappa=1}^{k} \mu_{0i}e^{b_iw_\kappa(x)-\lambda_{i\kappa}-1}+\lambda_i^T\mu_0
-p+a_i(x)-b_it+b_i^2 z\leq 0 ,\,\forall\, i=1,2,\cdots, m,\\
\theta_i\geq 0,\,\sum^L_{l=1}u_{il}=A\lambda_i,\,\forall\, i=1,2,\cdots, m,\\
\left(
\begin{array}{cc}
4z &  w^T\!(x)\\
w(x)&Q \\
\end{array} \right)\succeq 0,\,\beta_2\Sigma^{-1}_0+Y-Q \succeq 0,\,
\end{array}
\right.
\end{eqnarray*}
where $z,s,t,p\in \mathbb{R}$, $x\in \mathbb{R}^n$, $u_{i},\theta_i \in \mathbb{R}^L$, $\lambda_i\in \mathbb{R}^k$,
$\eta\in \mathbb{R}^J$, and $Y,Q\in \mathbb{R}^{k\times k}$ are decision variables for $i=1,2,\cdots,m$.
\end{example}

\section{GDRC with ${\rm(C2)}$ type distance function under second order moment information}
\noindent \setcounter{equation}{0}
In the former section, we consider the GDRC \eqref{GDRC001} under a separable distance function
about mean vector and covariance matrix and we find that the constraint can be rewritten as a deterministic system of inequalities. But for the distance function $H\big((\mu,\Sigma),(\mu',\Sigma')\big)$ which does not have the separable structure about mean and covariance, it is difficult to employ conjugate functions and support functions to obtain the deterministic reformulation of GDRC \eqref{GDRC001}. In this section, we discuss the GDRC \eqref{GDRC001} under a special distance function which is jointly convex in $(\mu,\Sigma)$. To be specific, supposing  $H\big((\mu,\Sigma),(\mu',\Sigma')\big)=\left(\mu-\mu_0\right)^T\Sigma^{-1}\left(\mu-\mu_0\right)$, we study the following GDRC
\begin{eqnarray} \label{jointgdrc}
E_{P\thicksim(\mu,\Sigma)}[\max_{i=1,2,\cdots,m}\{a_i(x)+b_iw^T\!(x)\xi\}]\leq \eta
\left(\mu-\mu_0\right)^T\Sigma^{-1}\left(\mu-\mu_0\right)
,\,\forall P\in \mathcal{P}_1,
\end{eqnarray}
where $\eta>0$, $a_i(x)$ and $w(x)$ are affine in $x$.
Then, we have the following result.

\begin{thm}\label{th3+}
GDRC \eqref{jointgdrc} can be reformulated as the following system of inequalities
\begin{eqnarray}\label{finaljointgdrc+}
\left\{
\begin{array}{l}
\delta^*\!\left(\!\left(Q+H_i\right)\big|Z_1\!\right)
\!+\!\delta^*\!\big(\!A\left(b_iw(x)\!+\!2h_i\right)|U_1\!\big)\!+\!tr( (Q+H_i)^T\Sigma_0)
+b_i w^T(x)\mu_0 \leq \varrho_i,\\
\varrho_i+v+a_i(x)-b_it+b_i^2 z\leq 0 ,z\ge 0,\,t^2-4zv\leq 0,\,h_{i0}\leq \eta,
\\
 \left(
\begin{array}{cc}
4z &  w^T\!(x)\\
w(x)&Q \\
\end{array} \right)\succeq 0,\,
 \left(
\begin{array}{cc}
H_i &  h_i\\
h_i^T & h_{i0}\\
\end{array} \right)\succeq 0,\,\,\forall\, i=1,2,\cdots, m,\\
\end{array}
\right.
\end{eqnarray}
where $t, z, \varrho_i, h_{i0}\in \mathbb{R}$, $h_i\in \mathbb{R}^k$, and $H_i \in \mathbb{R}^{k\times k}$ are decision variables for $i=1,2,\cdots, m$. Moreover, the feasible set of \eqref{finaljointgdrc+} is convex.
\end{thm}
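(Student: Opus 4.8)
The plan is to follow the template of the proof of Theorem~\ref{gdrcth1}, adding one new ingredient: a Schur (semidefinite) linearization of the jointly convex quadratic--fractional term on the right-hand side of \eqref{jointgdrc}. First I would rewrite \eqref{jointgdrc} as the single semi-infinite inequality $\sup_{P\in\mathcal{P}_1}\{E_P[\max_i\{a_i(x)+b_iw^T(x)\xi\}]-\eta(\mu-\mu_0)^T\Sigma^{-1}(\mu-\mu_0)\}\le 0$, insert the auxiliary family $\mathcal{P}_{11}(\mathbb{R}^k,\mu,\Sigma)$ exactly as in the proof of Theorem~\ref{gdrcth1}, and peel it into an outer supremum over $\mu\in\mathcal{U}_{\mu1},\Sigma\in\mathcal{U}_{\Sigma1}$ and the inner moment problem. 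The inner worst-case expectation $W(\mu,\Sigma):=\sup_{P\in\mathcal{P}_{11}}E_P[\cdots]$ is handled verbatim: passing to $y=w^T(x)\xi$, dualizing the one-dimensional moment problem and using the reparametrization $\gamma_0=p+\tfrac{(t-w^T(x)\mu)^2}{4z}$, $\gamma_1=\tfrac{t-w^T(x)\mu}{2z}$, $\gamma_2=\tfrac1{4z}$ gives $W(\mu,\Sigma)=\inf_{t,z>0}\{\max_i[b_iw^T(x)\mu+a_i(x)-b_it+b_i^2z]+\tfrac{w^T(x)\Sigma w(x)+t^2}{4z}\}$. Then, invoking $\Sigma\succeq0$ and Lemma~8.4.12 in \cite{B} as in Theorem~\ref{gdrcth1}, I replace $\tfrac{w^T(x)\Sigma w(x)}{4z}$ by $tr(Q^T\Sigma)$ with $Q-\tfrac{w(x)w^T(x)}{4z}\succeq0$ (Schur-equivalent for $z>0$ to $\left(\begin{smallmatrix}4z&w^T(x)\\w(x)&Q\end{smallmatrix}\right)\succeq0$) and $\tfrac{t^2}{4z}$ by $v$ with $t^2-4zv\le0$. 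Hence \eqref{jointgdrc} is equivalent to: for every $(\mu,\Sigma)$ there exist $t,z>0,v,Q$ with those constraints such that, for every $i$, $b_iw^T(x)\mu+a_i(x)-b_it+b_i^2z+tr(Q^T\Sigma)+v-\eta(\mu-\mu_0)^T\Sigma^{-1}(\mu-\mu_0)\le0$.

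Next I would linearize the concave term. Writing $\nu:=\mu-\mu_0$ and using the Schur complement $\left(\begin{smallmatrix}\Sigma&\nu\\\nu^T&\nu^T\Sigma^{-1}\nu\end{smallmatrix}\right)\succeq0$ for $\Sigma\succ0$, the trace inner product of this matrix with any $\left(\begin{smallmatrix}H_i&h_i\\h_i^T&h_{i0}\end{smallmatrix}\right)\succeq0$ is nonnegative, i.e.\ $tr(\Sigma H_i)+2\nu^Th_i+h_{i0}\,\nu^T\Sigma^{-1}\nu\ge0$; combined with $h_{i0}\le\eta$ and $\nu^T\Sigma^{-1}\nu\ge0$ this gives $-\eta\,\nu^T\Sigma^{-1}\nu\le tr(\Sigma H_i)+2\nu^Th_i$, with equality at the rank-one choice $h_{i0}=\eta,\ h_i=-\eta\Sigma^{-1}\nu,\ H_i=\eta\Sigma^{-1}\nu\nu^T\Sigma^{-1}$. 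Therefore $-\eta\,\nu^T\Sigma^{-1}\nu=\min\{tr(\Sigma H_i)+2\nu^Th_i:\left(\begin{smallmatrix}H_i&h_i\\h_i^T&h_{i0}\end{smallmatrix}\right)\succeq0,\ h_{i0}\le\eta\}$, so the $i$-th inequality becomes: there exist $H_i,h_i,h_{i0}$ with $\left(\begin{smallmatrix}H_i&h_i\\h_i^T&h_{i0}\end{smallmatrix}\right)\succeq0$, $h_{i0}\le\eta$, and $b_iw^T(x)\mu+tr((Q+H_i)^T\Sigma)+2\nu^Th_i+a_i(x)-b_it+b_i^2z+v\le0$. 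The point of this step is that, for fixed $(t,z,v,Q,H_i,h_i)$, the left-hand side is now \emph{affine} in $(\mu,\Sigma)$.

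With the inner expression affine in $(\mu,\Sigma)$, I would perform the min--max exchange that is the analogue of the passage from \eqref{+max-min} to \eqref{+max-min2+} in the proof of Theorem~\ref{gdrcth1}, pushing the worst case over $(\mu,\Sigma)$ inside each $i$-constraint: the condition becomes that there exist $t,z\ge0,v,Q$ (with $t^2-4zv\le0$ and the $Q$-LMI) such that for each $i$ there exist $H_i,h_i,h_{i0}$ with $\left(\begin{smallmatrix}H_i&h_i\\h_i^T&h_{i0}\end{smallmatrix}\right)\succeq0$, $h_{i0}\le\eta$ and $\sup_{\mu\in\mathcal{U}_{\mu1},\Sigma\in\mathcal{U}_{\Sigma1}}\{b_iw^T(x)\mu+tr((Q+H_i)^T\Sigma)+2(\mu-\mu_0)^Th_i\}+a_i(x)-b_it+b_i^2z+v\le0$. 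Finally I evaluate this affine supremum using the parametrizations $\mathcal{U}_{\mu1}=\{\mu_0+A\zeta:\zeta\in U_1\}$ and $\mathcal{U}_{\Sigma1}=\{\Sigma_0+\Xi:\Xi\in Z_1\}$ from \eqref{+dis} and the definition of the support function (with $A\in\mathbb{S}^k$): the $\Sigma$-part contributes $tr((Q+H_i)^T\Sigma_0)+\delta^*((Q+H_i)|Z_1)$ and the $\mu$-part contributes $b_iw^T(x)\mu_0+\delta^*(A(b_iw(x)+2h_i)|U_1)$. Introducing $\varrho_i$ as an upper bound on $\delta^*((Q+H_i)|Z_1)+\delta^*(A(b_iw(x)+2h_i)|U_1)+tr((Q+H_i)^T\Sigma_0)+b_iw^T(x)\mu_0$ splits the $i$-th condition into the first two lines of \eqref{finaljointgdrc+}, and collecting all constraints yields \eqref{finaljointgdrc+}. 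Convexity of the feasible set is then immediate: support functions and traces against fixed matrices are convex (indeed affine) in the decision variables, $w$ and the $a_i$ are affine in $x$, $t^2-4zv\le0$ is a (rotated second-order cone) convex constraint, and the two linear matrix inequalities are convex, so the feasible set is an intersection of convex sets.

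The step I expect to be the main obstacle is the min--max exchange. After the Schur linearization the inner function $\max_i[\cdots]$ is convex in $(t,z,v,Q)$ and, for each branch $i$ separately, concave (in fact affine) in $(\mu,\Sigma)$, but a branchwise maximum of concave functions need not be concave, so Sion's minimax theorem does not apply verbatim. As in the proof of Theorem~\ref{gdrcth1} one has to argue directly: eliminating $p,s$ so that what is left after the worst case over $(\mu,\Sigma)$ acts separately on each $i$-constraint, and then a violated-constraint/contradiction argument shows the supremum over $(\mu,\Sigma)$ can be moved inside each $i$-constraint. A secondary technical point is that, to realize the optimal $(H_i,h_i,h_{i0})$ and obtain equality in the support-function evaluations, one needs the suprema over $\mathcal{U}_{\mu1},\mathcal{U}_{\Sigma1}$ to be attained, which the coercivity of $-\eta(\mu-\mu_0)^T\Sigma^{-1}(\mu-\mu_0)$ in $\mu$ (together with compactness/closedness of the relevant moment sets) provides.
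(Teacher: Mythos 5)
Your proposal is correct and follows essentially the same route as the paper's proof: the same one-dimensional moment-problem dualization and reparametrization, the same Schur-complement linearizations yielding the LMI in $(4z,w(x),Q)$ and the dual block matrix in $(H_i,h_i,h_{i0})$, and the same support-function evaluation over $\mathcal{U}_{\mu1}$ and $\mathcal{U}_{\Sigma1}$ via the parametrization \eqref{+dis}. The only difference is one of ordering: the paper first pushes the supremum over $(\mu,\Sigma)$ inside each $i$-constraint and then takes the Lagrangian dual of the resulting maximization with an epigraph variable $r_i$, whereas you first establish the exact SDP representation $-\eta(\mu-\mu_0)^T\Sigma^{-1}(\mu-\mu_0)=\min\{tr(\Sigma H_i)+2(\mu-\mu_0)^Th_i\}$ over the same dual cone (verified constructively) and then exchange the resulting min with the sup over $(\mu,\Sigma)$ --- both variants reduce to the same conic strong-duality step, which the paper likewise invokes without verifying a constraint qualification.
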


\begin{proof}

Rewriting semi-infinite constraint \eqref{jointgdrc}, one has
\begin{eqnarray}\label{jointgdrc2}
\sup_{P\in \mathcal{P}_1}
\left\{
E_{P\thicksim(\mu,\Sigma)}\left[\max_{i=1,2,\cdots,m}\{a_i(x)+b_iw^T\!(x)\xi\}\right]
-\eta\left(\mu-\mu_0\right)^T\Sigma^{-1}\left(\mu-\mu_0\right)\right\}\leq 0.
\end{eqnarray}
Similarly, the left-hand side term of inequality \eqref{jointgdrc2} can be reformulated as
\begin{eqnarray}\label{+hh}
&&\sup_{\mu\in \mathcal{U}_{\mu1},
\Sigma\in \mathcal{U}_{\Sigma1}}\inf_{p,t,z,s}\;\; p+s-\eta\left(\mu-\mu_0\right)^T\Sigma^{-1}\left(\mu-\mu_0\right)\\\nonumber
&&\quad\quad\mbox{s.t.}\; \frac{w^T\!(x){\Sigma}\,w(x)+t^2}{4z}-s \leq 0,\;z\ge 0,\\\nonumber
&&\quad\quad\quad\quad\;b_iw^T\!(x){\mu}-p+a_i(x)-b_it+b_i^2 z\leq 0 ,\,\forall\,i=1,2,\cdots, m.
\end{eqnarray}
Reconstructing problem \eqref{+hh}, we have
\begin{eqnarray}\label{jointgdrc3}
&&\sup_{\mu\in \mathcal{U}_{\mu1},
\Sigma\in \mathcal{U}_{\Sigma1}}\inf_{p,t,z,\varrho_i}\;\;p\\\nonumber
&&\quad\quad\mbox{s.t.}\quad\;\frac{w^T\!(x){\Sigma}\,w(x)}{4z}+b_iw^T\!(x){\mu}-\eta\left(\mu-\mu_0\right)^T\Sigma^{-1}\left(\mu-\mu_0\right)\leq \varrho_i,\,\,\forall\,i=1,2,\cdots, m,\\\nonumber
&&\quad\quad\quad\quad\; \varrho_i+\frac{t^2}{4z}+a_i(x)-b_it+b_i^2 z\leq p ,\,z\ge 0,\,\forall\, i=1,2,\cdots, m.
\end{eqnarray}
By the same discussion as the proof of transforming \eqref{+max-min} into \eqref{+max-min2}, we obtain that problem \eqref{jointgdrc3} can be reformulated as
\begin{eqnarray}\label{jointsub}
&&\inf_{p,t,z,Q,\varrho_i}\;p\nonumber\\
&&\;\mbox{s.t.}\!\!\!\sup_{\mu\in \mathcal{U}_{\mu1},
\Sigma\in \mathcal{U}_{\Sigma1}}\!\left\{
tr(Q^T\Sigma)+b_iw^T\!(x){\mu}-\eta\left(\mu-\mu_0\right)^T\Sigma^{-1}\left(\mu-\mu_0\right)\right\}\leq \varrho_i,\,\forall\,i=1,2,\cdots, m,\\
&&\quad\quad\;\; \varrho_i+v+a_i(x)-b_it+b_i^2 z\leq p ,\,\forall\,i=1,2,\cdots, m,\nonumber\\\nonumber
&&\quad\quad\;\; \left(
\begin{array}{cc}
4z &  w^T\!(x)\\
w(x)&Q \\
\end{array} \right)\succeq 0,\,t^2-4zv\leq 0 ,\,z\ge 0.
\end{eqnarray}
Then, we notice that the subordinate maximization problem in constraint \eqref{jointsub} for each $i$ is equivalent to
\begin{eqnarray}\label{jointsub1}
&&\sup_{\mu\in \mathcal{U}_{\mu1},
\Sigma\in \mathcal{U}_{\Sigma1},r_i\ge0}\!
tr(Q^T\Sigma)+b_iw^T\!(x){\mu}-\eta r_i \\\nonumber
&&\quad\quad\mbox{s.t.} \;\; \left(\mu-\mu_0\right)^T\Sigma^{-1}\left(\mu-\mu_0\right)\leq r_i.
\end{eqnarray}
By Schur's complement, the constraint in \eqref{jointsub1} is equivalent to the following linear matrix inequality
 $$\left(
\begin{array}{cc}
\Sigma &  \left(\mu-\mu_0\right)\\
\left(\mu-\mu_0\right)^T & r_i\\
\end{array} \right)\succeq 0,$$
Then, the Lagrangian dual problem for \eqref{jointsub1} can be written as
\begin{eqnarray*}
&&\inf_{H_i,h_i,h_{i0}}\sup_{\mu\in \mathcal{U}_{\mu1},
\Sigma\in \mathcal{U}_{\Sigma1},r_i\ge 0}
tr(Q^T\Sigma)+b_iw^T\!(x){\mu}-\eta r_i\!+\!tr(H_i^T\Sigma)+2\left(\mu-\mu_0\right)^Th_i\!+\!h_{i0}r_i\\
&&\quad\mbox{s.t.} \; \left(
\begin{array}{cc}
H_i &  h_i\\
h_i^T & h_{i0}\\
\end{array} \right)\succeq 0,
\end{eqnarray*}
where $H_i\in \mathbb{R}^{k\times k},h_i\in \mathbb{R}^k,h_{i0}\in \mathbb{R}$ together form a matrix that is the dual variable associated with the constraint in \eqref{jointsub1}. By using the definition of support function, the above Lagrangian dual problem can
be simplified as
\begin{eqnarray*}
&&\inf_{H_i,h_i,h_{i0}}
\delta^*\!\left(\!\left(Q+H_i\right)\!|\mathcal{U}_{\Sigma1}\!\right)\!\!+\!
\delta^*\!\big(\!\left(b_iw\!(x)\!+\!2h_i\right)|\mathcal{U}_{\mu1}\!\big)
\!-\!2\mu_0^Th_i\\
&&\quad\mbox{s.t.} \; \left(
\begin{array}{cc}
H_i &  h_i\\
h_i^T & h_{i0}\\
\end{array} \right)\succeq 0,\,h_{i0}\leq \eta.
\end{eqnarray*}
Thus, problem \eqref{jointgdrc3} is equivalent to
\begin{eqnarray}\label{jointgdrc4}
&&\inf_{p,v,t,z,\varrho_i,H_i,h_i,h_{i0}}\;p\\\nonumber
&&\;\mbox{s.t.}\;
\delta^*\!\left(\!\left(Q+H_i\right)\!|\mathcal{U}_{\Sigma1}\!\right)\!\!+\!
\delta^*\!\big(\!\left(b_iw^T\!(x)\!+\!2h_i\right)|\mathcal{U}_{\mu1}\!\big)
\!-\!2\mu_0^Th_i\leq \varrho_i,\,\forall\,i=1,2,\cdots, m,\\\nonumber
&&\quad\quad\;\; \varrho_i+v+a_i(x)-b_it+b_i^2 z\leq p,\,z\ge 0,\,h_{i0}\leq \eta,\,t^2-4zv\leq 0 ,\,\,\forall\,i=1,2,\cdots, m,\\\nonumber
&&\quad\quad\;\;
\left(
\begin{array}{cc}
4z &  w^T\!(x)\\
w(x)&Q \\
\end{array} \right)\succeq 0,\,
\left(
\begin{array}{cc}
H_i &  h_i\\
h_i^T & h_{i0}\\
\end{array} \right)\succeq 0,
\,\forall\,i=1,2,\cdots, m.\nonumber
\end{eqnarray}
In addition,
\begin{eqnarray*}
&&\delta^*\!\left(\!\left(Q+H_i\right)\big|\mathcal{U}_{\Sigma1}\!\right)=
\!tr( (Q+H_i)^T\Sigma_0)+\delta^*\!\left(\!\left(Q+H_i\right)\big|Z_1\!\right),\,\forall\,i=1,2,\cdots, m,\\
&&\delta^*\!\big(\!\left(b_iw(x)\!+\!2h_i\right)|\mathcal{U}_{\mu1}\!\big)=
\left(b_iw(x)\!+\!2h_i\right)^T\mu_0+\delta^*\!\big(\!A\left(b_iw(x)\!+\!2h_i\right)|U_1\!\big),\,\forall\,i=1,2,\cdots, m.
\end{eqnarray*}
By substituting the above results into \eqref{jointgdrc4}, we find that GDRC \eqref{jointgdrc} can be written as reformulation \eqref{finaljointgdrc+}. This completes the proof.
\end{proof}

From Theorem \ref{th3+}, it is obvious that the equivalent system of the globalized distributionally robust optimization with ${\rm(C2)}$ type distance function under distribution set \eqref{P} only need to compute support functions $\delta^*\!\left(\cdot|Z_1\!\right)$ and $\delta^*(\cdot|U_1)$. Actually, many computationally tractable forms for $\delta^*\!(\cdot|U_1)$ can be found in \cite{BBHV2} and computationally tractable forms for $\delta^*\!\left(\cdot|Z_1\!\right)$ can be found in \cite{DHX}.
Next, we show a computationally tractable example for \eqref{finaljointgdrc+}.

\begin{example}
Let
$U_{1}=\left\{\zeta\in \mathbb{R}^n| C\zeta\leq c\right\},Z_1=\left\{\Xi\in \mathbb{S}^n|\,\|\Xi \|_{\sigma p} \!\leq\! \tau_2 \right\}$,
where $\theta \geq 0$, $c\in \mathbb{R}^k$, and $C\in \mathbb{R}^{L\times k}$.
By $tr(X^TY)\leq \|X\|_{\sigma p}\|Y\|_{\sigma q}$, one has
\begin{eqnarray*}
&&\delta^*\!\left(\!\left(Q+H_i\right)\big|Z_1\!\right)=\|Q+H_i\|_{\sigma q},\,\forall\,i=1,2,\cdots, m,
\\
&&\delta^*\!\big(\!A\left(b_iw(x)\!+\!2q_i\right)|U_1\!\big)=\min_{u_i\geq 0}
\left\{c^T u_i|\,C u_i=A\left(b_iw(x)\!+\!2q_i\right)\right\},\,\forall\,i=1,2,\cdots, m,
\end{eqnarray*}
where $\frac{1}{p}+\frac{1}{q}=1$.
By Theorem \ref{th3+}, GDRC \eqref{jointgdrc} can be rewritten as
\begin{eqnarray*}
\left\{
\begin{array}{l}
\|Q+H_i\|_{\sigma q}
+c^T u_i+tr( (Q+H_i)^T\Sigma_0)
+b_i^T w(x)\mu_0 \leq \varrho_i,\,\forall\,i=1,2,\cdots, m,\\
\varrho_i+v+a_i(x)-b_it+b_i^2 z\leq 0 ,z> 0,\,t^2-4zv\leq 0,\,h_{i0}\leq \eta,\,\forall\,i=1,2,\cdots, m,
\\
C u_i=A\left(b_iw(x)\!+\!2q_i\right),\,u_i> 0,\,\forall\,i=1,2,\cdots, m,\\
 \left(
\begin{array}{cc}
4z &  w^T\!(x)\\
w(x)&Q \\
\end{array} \right)\succeq 0,\,
 \left(
\begin{array}{cc}
H_i &  h_i\\
h_i^T & h_{i0}\\
\end{array} \right)\succeq 0,\,\forall\,i=1,2,\cdots, m,\\
\end{array}
\right.
\end{eqnarray*}
where $z,t,\varrho_i, h_{i0}\in \mathbb{R}$, $x\in \mathbb{R}^n$, $u_{i}, h_i\in \mathbb{R}^k$, and $Q,H_i\in \mathbb{R}^{k\times k}$ are decision variables for $i=1,2,\cdots, m$.
\end{example}

\section{GDRC with ${\rm(C3)}$ type distance function under first order moment and support information }
\noindent \setcounter{equation}{0}
In the former discussion, we do not impose restriction on support information and let $\xi\in \mathbb{R}^k$. In
many practical problems the support of the distribution $P$ about $\xi$ is known to be a strict subset of $\mathbb{R}^k$.
We notice that the support information of distribution $P$ will result in unnecessarily conservation about distributionally
robust constraint. Here we consider the following distribution set under support information and first moment information
\begin{eqnarray}\label{p2}
\mathcal{P}_2
=\left\{
P\in
\mathfrak{P}: P({\xi}\in \mathcal{U}_{\xi}
)=1,\,
\mu\in \mathcal{U}_{\mu1}
\right\},
\end{eqnarray}
where $\mathcal{U}_{\xi}$ and $\mathcal{U}_{\mu1}$ are convex and compact sets. Since distribution set \eqref{p2} do not contain covariance information, we consider the following GDRC
\begin{eqnarray}\label{gdrc1support}
E_{P}[g(\xi,x)]\leq
\min_{\mu'\in \mathcal{U}_{\mu2}} \varphi(\mu,\mu')
,\,\forall P\in \mathcal{P}_2,
\end{eqnarray}
where $g(\cdot,x)$ is proper concave and upper-semicontinuous for all ${x\in \mathbb{R}^n}$, and $\varphi(\mu,\mu')$ is a nonnegative jointly convex function with $\varphi(\mu,\mu)=0$.

\begin{thm}\label{th1.3+}
GDRC \eqref{gdrc1support} under information set \eqref{p2} is equivalent to the following inequality
\begin{eqnarray}\label{sf+}
 \delta^*(w|\mathcal{U}_{\xi})-g_*(w+s_1,x)+
\delta^*(A(s_1-\theta)|U_{1})+\varphi^*(\theta;-\theta)+\delta^*(A\theta|{U}_{2})+\mu_0^Ts_1
\leq 0.
\end{eqnarray}
where $w,s_1,\theta\in \mathbb{R}^k$, $x\in \mathbb{R}^n$ are decision variables. If $g(\xi,x)$ is convex in $x$ for all ${\xi\in \mathbb{R}^n}$, then the feasible set of \eqref{th1.3+} is convex.
\end{thm}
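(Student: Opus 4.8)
The plan is to follow the scheme of Theorem~\ref{gdrcth1}: recast the semi-infinite constraint as a single supremum, peel off and dualize the inner moment problem in $\xi$, interchange the resulting $\sup$--$\inf$ by a minimax argument, and finally treat the mean-ambiguity block exactly as in the proof of Theorem~\ref{gdrcth1}. First I would note that, since the right-hand side of \eqref{gdrc1support} is $\min_{\mu'\in\mathcal{U}_{\mu2}}\varphi(\mu,\mu')$, the constraint is equivalent to $\sup_{P\in\mathcal{P}_2,\ \mu'\in\mathcal{U}_{\mu2}}\bigl(E_P[g(\xi,x)]-\varphi(\mu_P,\mu')\bigr)\le 0$, where $\mu_P$ denotes the mean of $P$. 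Splitting the supremum over $P$ according to its mean $\mu\in\mathcal{U}_{\mu1}$ and setting $V(\mu,x):=\sup\{E_P[g(\xi,x)]:P\in\mathfrak{P},\,P(\xi\in\mathcal{U}_\xi)=1,\,E_P[\xi]=\mu\}$, this becomes $\sup_{\mu\in\mathcal{U}_{\mu1},\ \mu'\in\mathcal{U}_{\mu2}}\bigl(V(\mu,x)-\varphi(\mu,\mu')\bigr)\le 0$.

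Next I would dualize the generalized moment problem defining $V(\mu,x)$. Assigning multipliers $y_0\in\mathbb{R}$ and $y\in\mathbb{R}^k$ to the normalization and mean constraints, its linear conic dual is $\inf\{y_0+y^T\mu:\,y_0+y^T\xi\ge g(\xi,x)\ \forall\,\xi\in\mathcal{U}_\xi\}$; under a Slater-type condition on $\mathcal{U}_\xi$ and $\mathcal{U}_{\mu1}$ (which also gives dual attainment) strong duality holds, and eliminating $y_0$ yields $V(\mu,x)=\inf_{y\in\mathbb{R}^k}\bigl(\rho(y,x)+y^T\mu\bigr)$ with $\rho(y,x):=\sup_{\xi\in\mathcal{U}_\xi}\bigl(g(\xi,x)-y^T\xi\bigr)$. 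Because $g(\cdot,x)$ is proper concave and u.s.c. and $\mathcal{U}_\xi$ is convex and compact, replacing $g(\xi,x)$ by its biconjugate and interchanging the supremum over $\mathcal{U}_\xi$ with the infimum in the conjugate (Sion's minimax theorem, or equivalently Lemma~\ref{Fenchel} applied to $g(\cdot,x)$ and $\delta(\cdot|\mathcal{U}_\xi)$) gives $\rho(y,x)=\inf_{v\in\mathbb{R}^k}\bigl(\delta^*(v|\mathcal{U}_\xi)-g_*(y+v,x)\bigr)$. Hence $V(\mu,x)=\inf_{y,v}\bigl(\delta^*(v|\mathcal{U}_\xi)-g_*(y+v,x)+y^T\mu\bigr)$, which is concave in $\mu$ (an infimum of affine functions).

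The constraint now reads $\sup_{\mu\in\mathcal{U}_{\mu1},\ \mu'\in\mathcal{U}_{\mu2}}\inf_{y,v}\bigl(\delta^*(v|\mathcal{U}_\xi)-g_*(y+v,x)+y^T\mu-\varphi(\mu,\mu')\bigr)\le 0$. The integrand is concave in $(\mu,\mu')$, convex in $(y,v)$, and $\mathcal{U}_{\mu1}\times\mathcal{U}_{\mu2}$ is compact and convex, so Sion's minimax theorem permits exchanging $\sup$ and $\inf$. It then remains to evaluate $\sup_{\mu\in\mathcal{U}_{\mu1},\ \mu'\in\mathcal{U}_{\mu2}}\bigl(y^T\mu-\varphi(\mu,\mu')\bigr)$, which is exactly the block treated in Theorem~\ref{gdrcth1}: using the parametrization $\mathcal{U}_{\mu i}=\{\mu_0+A\zeta:\zeta\in U_i\}$ of \eqref{+dis}, introducing the splitting $\mu=M_1,\ \mu'=M_2$, dualizing these equalities with multipliers $\lambda_1,\lambda_2$, and using $\varphi(M,M)=0$ to force $\lambda_1=-\lambda_2$ (the same step carried out there for $\psi$), one obtains, with $\theta:=\lambda_2$, that this supremum equals $\mu_0^Ty+\inf_\theta\bigl(\delta^*(A(y-\theta)|U_1)+\delta^*(A\theta|U_2)+\varphi^*(\theta;-\theta)\bigr)$. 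Substituting this back and relabelling $w:=v$, $s_1:=y$, the constraint becomes $\inf_{w,s_1,\theta}\bigl(\delta^*(w|\mathcal{U}_\xi)-g_*(w+s_1,x)+\delta^*(A(s_1-\theta)|U_1)+\varphi^*(\theta;-\theta)+\delta^*(A\theta|U_2)+\mu_0^Ts_1\bigr)\le 0$, and since the associated infima are attained this is precisely the assertion that some $w,s_1,\theta$ satisfy \eqref{sf+}.

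For the convexity claim, when $g(\xi,\cdot)$ is convex for every $\xi$, the partial concave conjugate $g_*(v,x)=\inf_\xi\bigl(v^T\xi-g(\xi,x)\bigr)$ is jointly concave in $(v,x)$, so $-g_*(w+s_1,x)$ is jointly convex in $(w,s_1,x)$; each support function is convex in its (affine) argument and $\theta\mapsto\varphi^*(\theta;-\theta)$ is convex; therefore the left-hand side of \eqref{sf+} is jointly convex in $(x,w,s_1,\theta)$ and the set of feasible $x$, being the projection of a convex sublevel set, is convex. I expect the main obstacle to be the strong-duality steps: identifying the Slater/interior condition that makes the dual of the inner moment problem tight and attained, and checking the compactness, semicontinuity and concavity/convexity hypotheses — in particular tracking the effective domain of $g_*$ — that justify the two minimax interchanges; the rest is bookkeeping along the lines of Theorem~\ref{gdrcth1}.
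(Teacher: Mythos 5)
Your proposal is correct and follows essentially the same route as the paper's proof: fix the mean, dualize the inner moment problem, apply Lemma~\ref{Fenchel} to get $\min_{w}\delta^*(w|\mathcal{U}_{\xi})-g_*(w+s_1,x)$, exchange the outer supremum with the infimum, and reuse the mean-ambiguity block from Theorem~\ref{gdrcth1}. The only differences are in justification rather than substance — you invoke Sion's minimax theorem and an explicit Slater condition where the paper relies on the elementary contradiction argument from Theorem~\ref{gdrcth1} and leaves strong duality of the moment problem implicit — and your convexity argument for $g_*$ is the one the paper gives.
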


\begin{proof}
Firstly, we define the following distribution information set
\begin{eqnarray*}
\mathcal{P}_{21}\left(\mathcal{U}_{\xi},{\mu}\right)=\left\{\begin{array}{ccc}
P: P({\xi}\in
\mathcal{U}_{\xi})=1,\, \mathbb{E}[{\xi}]={\mu}
\end{array}\right\}.
\end{eqnarray*}
In set $\mathcal{P}_{21}$, we suppose the first moment is given.
Then, \eqref{gdrc1support} can be reformulated as follows:
\begin{eqnarray}\label{supsupdrsupport3}
\sup_{
\mu\in \mathcal{U}_{\mu1},\mu'\in \mathcal{U}_{\mu2}
}
\left\{
\sup_{{P}\in
\mathcal{P}_{21}}
E_{P}\left[g(\xi,x)\right]-\varphi(\mu,\mu')
\right\}\leq 0.
\end{eqnarray}
The inner subordinate problem
$\sup_{{P}\in
\mathcal{P}_{21}}
E_{P}\left[g(\xi,x)\right]$ can be unfolded as follows:
\begin{eqnarray*}
&&\sup_{\zeta\in \mathcal{M}_+}\;
\int_{\mathbb{R}}g(\xi,x) \zeta(d\xi)\\
&&\;\;\mbox{s.t.}\;\; \int_{\mathcal{U}_{\xi}}\zeta(d\xi)=1,\; \int_{\mathcal{U}_{\xi}} \xi\,\zeta(d\xi)=\mu
\end{eqnarray*}
and so the dual is
\begin{eqnarray}\label{drorg1.1}
&&\inf_{s_0,s_1}\;
s_0+s_1^T\mu\\\nonumber
&&\,\mbox{s.t.}\;\, s_0\geq g(\xi,x)-s_1^T\xi ,\; \forall \xi\in \mathcal{U}_{\xi},
\end{eqnarray}
where $s_0\in \mathbb{R},s_1\in \mathbb{R}^n$ are the dual variables of corresponding constraints.
The robust counterpart of the constraint in problem \eqref{drorg1.1} is
\begin{eqnarray*}
s_0\geq \max_{\xi\in \mathcal{U}_{\xi}}\left\{ g(\xi,x)-s_1^T\xi \right\}.
\end{eqnarray*}
By Lemma \ref{Fenchel}, we have
\begin{eqnarray*}
&&\max_{\xi\in \mathcal{U}_{\xi}}\left\{ g(\xi,x)-s_1^T\xi \right\}\\
&=& \max_{\xi}\left\{g(\xi,x)-s_1^T\xi-\delta(\xi|\mathcal{U}_{\xi}) \right\}\\
&=&\min_{w} \delta^*(w|\mathcal{U}_{\xi})-[g(w,x)-s_1^T w]_*\\
&=&\min_{w} \delta^*(w|\mathcal{U}_{\xi})-g_*(w+s_1,x).
\end{eqnarray*}

\noindent Then~\eqref{drorg1.1} can be simplified as
\begin{eqnarray*}
&&\min_{w,s_1}\; \delta^*(w|\mathcal{U}_{\xi})-g_*(w+s_1,x)+s_1^T\mu.
\end{eqnarray*}
Thus, problem~\eqref{supsupdrsupport3} becomes
\begin{eqnarray}\label{saddlept}
\sup_{
\mu\in \mathcal{U}_{\mu1},\mu'\in \mathcal{U}_{\mu2}
}
\left\{
\min_{w,s_1}\; \delta^*(w|\mathcal{U}_{\xi})-g_*(w+s_1,x)+s_1^T\mu-\varphi(\mu,\mu')
\right\}\leq 0.
\end{eqnarray}
By the similar discussion which transforms \eqref{+max-min} into \eqref{+max-min2+} in Theorem \ref{gdrcth1},  the left hand term of \eqref{saddlept} is equivalent to
\begin{eqnarray*}
\min_{w,s_1}\sup_{
\mu\in \mathcal{U}_{\mu1},\mu'\in \mathcal{U}_{\mu2}
}
\left\{
\; \delta^*(w|\mathcal{U}_{\xi})-g_*(w+s_1,x)+s_1^T\mu-\varphi(\mu,\mu')
\right\}.
\end{eqnarray*}
Note that $\sup_{
\mu\in \mathcal{U}_{\mu1},\mu'\in \mathcal{U}_{\mu2}
}\left\{ s_1^T\mu-\varphi(\mu,\mu')
\right\}$ can be rewritten as
 \begin{eqnarray*}
&&\min_{\theta}
\left\{
\delta^*(A(s_1-\theta)|U_{1})+\varphi^*(\theta;-\theta)+\delta^*(A\theta|{U}_{2})+\mu_0^Ts_1
\right\}.
\end{eqnarray*}
Thus, inequality \eqref{gdrc1support} can be reformulated as the following problem
\begin{eqnarray*}
&&\min_{w,s_1,\theta}
\left\{  \delta^*(w|\mathcal{U}_{\xi})-g_*(w+s_1,x)+
\delta^*(A(s_1-\theta)|U_{1})+\varphi^*(\theta;-\theta)+\delta^*(A\theta|{U}_{2})+\mu_0^Ts_1
\right\}\leq 0.
\end{eqnarray*}
This shows that \eqref{th1.3+} holds. Next, we will prove that $g_*(v,x)$ is concave in $(v,x)$ when $g(\xi,x)$ is convex in $x$ for all ${\xi\in \mathbb{R}^k}$. In fact, for any $(v_1, x_1)$, $(v_2, x_2)$ and $t\in [0,1]$, we have
\begin{eqnarray*}
&&tg_*(v_1,x_1)+(1-t)g_*(v_2,x_2)\\
&=&t\inf_{\xi \in \mathbb{R}^k}\left\{v_1^T\xi-g_(\xi,x_1)\right\}+(1-t)\inf_{\xi \in \mathbb{R}^k}\left\{v_2^T\xi-g_(\xi,x_2) \right\}\\
&\leq& \inf_{\xi \in \mathbb{R}^k} \left\{ (tv_1+(1-t)v_2)^T\xi-(tg_(\xi,x_1)+(1-t)g_(\xi,x_2))  \right\}\\
&\leq& \inf_{\xi \in \mathbb{R}^k} \left\{ (tv_1+(1-t)v_2)^T\xi-g(\xi,tx_1+(1-t)x_2) \right\}\\
&=& g_*(tv_1+(1-t)v_2,tx_1+(1-t)x_2).
\end{eqnarray*}
Combing the above discussion with the fact that the support functions and conjugate function are convex, we conclude that the feasible set is convex when $g(\xi,x)$ is convex in $x$ for all ${\xi\in \mathbb{R}^k}$.
\end{proof}

From Theorem \ref{th1.3+}, we notice that the computations involving $\mathcal{U}_{\xi}, {U}_{1}, U_{2},\varphi$ and $g$
are all separable. More details about the computation of $g_*(\cdot,x)$, please see \cite{BT}. Actually, \eqref{sf+} can be
unfolded in a computationally tractable way for variety choice of $\mathcal{U}_{\xi}, {U}_{1}, U_{2},\varphi$ and $g$.
Next we will show an example.

\begin{example}
Let
\begin{eqnarray*}
&&U_{i}=\left\{\zeta\in \mathbb{R}^n|\,\|\zeta\|_{2}\leq \tau_i \right\},\;i=1,2,\\
&&U_{\xi}=\left\{\xi\in \mathbb{R}^m|\,\|\xi\|_{1}\leq \rho_1,\,\|\xi\|_{2}\leq \rho_2,\,\|\xi\|_{\infty}\leq \rho_3, \right\},\\
&&\varphi(\mu,\mu')=\alpha\|\mu-\mu'\|_{p},\,   g(\xi,x)=\xi^Tx+\vartheta,
\end{eqnarray*}
where $\tau_1\geq \tau_2 \geq 0, \rho_{1}\geq 0, \rho_{2}\geq 0, \rho_{3}\geq 0, \alpha\geq 0$.
By simply computation, we have
\begin{eqnarray*}
&&\delta^*\big(A\left(s_1-\theta  \right)|U_{1}\big)=\rho_1\|A\left(s_1-\theta\right)\|_{2},\\
&&\delta^*\left(A\theta|{U}_{2}\right)=\rho_2\|A\theta\|_{2},\\
&&\delta^*\left(w|U_{\xi}\right)=
\min_{y_1,y_2,y_3}\left\{\rho_1\|y_1\|_{\infty}+\rho_2\|y_2\|_{2}+\rho_3\|y_3\|_{1}\,
 |\, y_1+y_2+y_3=w  \right\},\\
&&\varphi^*\left(\theta;-\theta\right)=
\left\{ \begin{array}{ll} 0, & \mbox{if}\, \|\lambda\|_{q}\leq \alpha,\\ +\infty, & \mbox{otherwise} \,. \end{array} \right.
\\
&&g_*\left(w+s_1,x\right)=
\left\{ \begin{array}{ll} 0, & \, w+s_1=x,\\ +\infty, &\, w+s_1\neq x. \end{array} \right.
\end{eqnarray*}
Here $\|\cdot\|_{q}$ is the dual norm of $\|\cdot\|_{p}$ with $\frac{1}{p}+\frac{1}{q}=1$. Thus, GDRC \eqref{gdrc1support} becomes
\begin{eqnarray*}
&&\rho_1\|y_1\|_{\infty}+\rho_2\|y_2\|_{2}+\rho_3\|y_3\|_{1}
+
\rho_1\|A\left(s_1-\theta\right)\|_{2}+
+\rho_2\|A\theta\|_{2}+\mu_0^Ts_1
\leq 0,\\
&&y_1+y_2+y_3=w,\, w+s_1=x,\,\|\lambda\|_{q}\leq \alpha,\,
\end{eqnarray*}
where $w, s_1, \theta, y_1, y_2, y_3, \lambda\in \mathbb{R}^k$, $x\in \mathbb{R}^n$
are decision variables.
\end{example}

By the similar proof of Corollary \ref{cor1}, we can derive the following result.

\begin{cor}
Suppose $\mathcal{U}_{\mu2}=\{\mu_0\}$, then $\min_{\mu'\in \mathcal{U}_{\mu2}} \varphi(\mu,\mu')=\varphi(\mu,\mu_0)\triangleq \varphi(\mu)$, where  $\varphi(\mu)$ is a nonnegative proper convex and lower-semicontinuous function with $\varphi(\mu_0)=0$.
Then, GDRC \eqref{gdrc1support} can be equivalently reformulated as
\begin{eqnarray*}
 \delta^*(w|\mathcal{U}_{\xi})-g_*(w+s_1,x)+
\delta^*(A\theta|U_{1})+\varphi^*(s_1-\theta)+\mu_0^T\theta
\leq 0,
\end{eqnarray*}
where $w,s_1,\theta\in \mathbb{R}^k$, $x\in \mathbb{R}^n$ are decision variables.
\end{cor}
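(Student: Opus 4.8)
The plan is to follow the argument of Theorem \ref{th1.3+} verbatim, replacing the inner minimization $\min_{\mu'\in\mathcal{U}_{\mu 2}}\varphi(\mu,\mu')$ by the single term $\varphi(\mu)=\varphi(\mu,\mu_0)$, which by hypothesis is nonnegative, proper, convex and lower-semicontinuous with $\varphi(\mu_0)=0$. As in the proof of Theorem \ref{th1.3+}, I would first introduce the auxiliary set $\mathcal{P}_{21}(\mathcal{U}_\xi,\mu)$ with prescribed first moment, so that GDRC \eqref{gdrc1support} under information set \eqref{p2} becomes $\sup_{\mu\in\mathcal{U}_{\mu 1}}\big\{\sup_{P\in\mathcal{P}_{21}}E_P[g(\xi,x)]-\varphi(\mu)\big\}\le 0$. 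Dualizing the inner generalized moment problem exactly as in \eqref{drorg1.1}, its value equals $\inf_{s_0,s_1}\{s_0+s_1^T\mu : s_0\ge g(\xi,x)-s_1^T\xi,\ \forall\xi\in\mathcal{U}_\xi\}$, and the robust constraint on $s_0$ is resolved by Lemma \ref{Fenchel} to give $\min_w\{\delta^*(w|\mathcal{U}_\xi)-g_*(w+s_1,x)\}$. Hence the inner value is $\min_{w,s_1}\{\delta^*(w|\mathcal{U}_\xi)-g_*(w+s_1,x)+s_1^T\mu\}$, and GDRC reads $\sup_{\mu\in\mathcal{U}_{\mu 1}}\min_{w,s_1}\{\delta^*(w|\mathcal{U}_\xi)-g_*(w+s_1,x)+s_1^T\mu-\varphi(\mu)\}\le 0$.

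Next I would interchange the outer $\sup$ and the inner $\min$, using the same monotonicity/optimality argument employed to pass from \eqref{+max-min} to \eqref{+max-min2+} in Theorem \ref{gdrcth1} (the slack $s_0$ here plays the role of the variable $s$ there); this turns the constraint into $\min_{w,s_1}\big\{\delta^*(w|\mathcal{U}_\xi)-g_*(w+s_1,x)+\sup_{\mu\in\mathcal{U}_{\mu 1}}(s_1^T\mu-\varphi(\mu))\big\}\le 0$. It then remains to evaluate $\sup_{\mu\in\mathcal{U}_{\mu 1}}\{s_1^T\mu-\varphi(\mu)\}$. Writing this as $\sup_\mu\{s_1^T\mu-\varphi(\mu)-\delta(\mu|\mathcal{U}_{\mu 1})\}=(\varphi+\delta(\cdot|\mathcal{U}_{\mu 1}))^*(s_1)$ and applying Lemma \ref{Fenchel} (equivalently, the conjugate-of-a-sum / infimal-convolution identity), it equals $\min_\theta\{\varphi^*(s_1-\theta)+\delta^*(\theta|\mathcal{U}_{\mu 1})\}$. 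Using the parametrization $\mathcal{U}_{\mu 1}=\{\mu_0+A\zeta:\zeta\in U_1\}$ from \eqref{+dis} and $A\in\mathbb{S}^k$, one has $\delta^*(\theta|\mathcal{U}_{\mu 1})=\mu_0^T\theta+\delta^*(A\theta|U_1)$. Substituting back yields the claimed reformulation $\delta^*(w|\mathcal{U}_\xi)-g_*(w+s_1,x)+\delta^*(A\theta|U_1)+\varphi^*(s_1-\theta)+\mu_0^T\theta\le 0$, with $w,s_1,\theta$ (and $x$) as free variables, since minimizing over the introduced variables is absorbed into feasibility of a single inequality.

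The main obstacle is the rigorous justification of the two dualization steps: (i) the $\sup$--$\min$ interchange, which is not an instant consequence of a minimax theorem but follows from the specific structure of the program (as in Theorem \ref{gdrcth1}), and (ii) the application of Lemma \ref{Fenchel} to $\varphi+\delta(\cdot|\mathcal{U}_{\mu 1})$, which requires $\varphi$ proper, convex and lower-semicontinuous — guaranteed by hypothesis — together with a continuity/relative-interior qualification ensuring that the conjugate of the sum equals the infimal convolution of the conjugates; this is where the assumptions on $\varphi$ and the compactness of $\mathcal{U}_{\mu 1}$ (hence of $U_1$) are used. Everything else is bookkeeping analogous to the proof of Theorem \ref{th1.3+} and Corollary \ref{cor1}.
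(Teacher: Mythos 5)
Your proposal is correct and follows essentially the same route the paper intends: the paper proves this corollary only by reference ("by the similar proof of Corollary \ref{cor1}"), i.e.\ by rerunning the argument of Theorem \ref{th1.3+} and then handling $\sup_{\mu\in\mathcal{U}_{\mu1}}\{s_1^T\mu-\varphi(\mu)\}$ via Lemma \ref{Fenchel} exactly as in the proof of Corollary \ref{cor1}, which yields $\min_\theta\{\delta^*(\theta|\mathcal{U}_{\mu1})+\varphi^*(s_1-\theta)\}$ and then $\delta^*(\theta|\mathcal{U}_{\mu1})=\mu_0^T\theta+\delta^*(A\theta|U_1)$. Your reconstruction, including the sup--min interchange and the constraint-qualification remarks, matches this and in fact supplies more detail than the paper does.
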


\section{A numerical example}
In this section, we consider a distributionally robust portfolio selection problem with the Conditional Value-at-Risk as the risk measure to illustrate the efficiency of GDRC. Conditional Value-at-Risk (CVaR), defined as the mean of the tail distribution exceeding VaR, has become more and more popular  in financial management recently. As a coherent risk measure, CVaR can be reformulated as a convex program. We recall the definition of CVaR at level $\epsilon$ with respect to $P$ for any fixed ${x}$ in Rockafellar and Uryasev\cite{RU}:
\begin{eqnarray}\label{cvar}
\mbox{$P$-CVaR}_{\epsilon}\left(L({x},{\xi})\right)=\inf_{\beta\in
\mathbb{R}} \left\{\beta+\frac{1}{\epsilon}\mathbb{E}_{P}\left[(L({x},{\xi})-\beta)^+\right]\right\},
\end{eqnarray}
where $L({x},{\xi})=-x^T\xi$ denotes the loss function associated with the allocation vector $x\in \mathbb{R}^3$ and the random stocks' returns vector $\xi\in \mathbb{R}^3$. To be specific, $x^T\xi$ is the portfolio return. Three constituent stocks in the financial market have been chosen by the investor. The empirical mean vector and covariance matrix of the three stocks' returns are estimated as follows:
\begin{eqnarray*}
\mu_0=\begin{pmatrix}
0.0409\\
0.0854\\
0.0702
\end{pmatrix},
\quad
\Sigma_0=
  \begin{pmatrix}
0.0075& 0.0065 & 0.0080\\
0.0065 &0.0149 & 0.0089\\
    0.0073 & 0.0089 & 0.0121
  \end{pmatrix}.
\end{eqnarray*}
Actually, it is hard to acquire the precise knowledge of the distribution $P$ in practice. Without loss of generality, we assume $P\in \mathcal{P}_1$. A distributionally robust portfolio selection problem using worst case CVaR as a risk measure can be represented as
\begin{eqnarray}\label{s5cvar1}
&&\min_{x} \;\;\sup_{P\in \mathcal{P}_1}\mbox{$P$-CVaR}_{\epsilon}\left(-x^T\xi\right)\\\nonumber
&&\;\mbox{s.t.}\quad x^Te=1, x\ge 0.\nonumber
\end{eqnarray}
We do not require a lower limit on the expected return since the CVaR reformulation can be viewed as the risk-adjusted expected return form \cite{HZFF}. By introducing a new variable $v$,  similar discussion in \cite{ZF}, \eqref{s5cvar1} can be reformulated as
\begin{eqnarray}\label{s5cvar2}
&&\min_{x,\beta,v} \;\;v\\\nonumber
&&\;\mbox{s.t.}\quad \beta+\frac{1}{\epsilon}\mathbb{E}_{P}\left[(-x^T\xi-\beta)^+\right]\le v,\,\forall {P\in \mathcal{P}_1},\\\nonumber
&&\quad\quad\;\; x^Te=1, x\ge 0.
\end{eqnarray}
The worst case analysis would be performed and the optimal strategy would be conservative. To enhance the flexibility, the globalized distributionally robust portfolio selection problem can be represented as
\begin{eqnarray}\label{gs5cvar2}
&&\min_{x,\beta,v} \;\;v\\\nonumber
&&\;\mbox{s.t.}\quad \beta+\frac{1}{\epsilon}\mathbb{E}_{P}\left[(-x^T\xi-\beta)^+\right]\le v+\inf_{(\mu',\Sigma')\in \mathcal{U}_{(\mu'\!,\Sigma')}}\!\!\!\!\!\!\! H\left((\mu,\Sigma),(\mu',\Sigma')\right),\,\forall P\in \mathcal{P}_1,\\\nonumber
&&\quad\quad\;\; x^Te=1, x\ge 0.
\end{eqnarray}
In the following, we compare the performance of portfolio selection problem \eqref{s5cvar2} and \eqref{gs5cvar2} by using the approaches given in Section 3. All optimization problems are solved using
the YALMIP modeling language and MOSEK 9 (see \cite{L}).

\begin{figure}[h]\label{4cvarfig1}
\centering
\includegraphics[width=7.8cm]{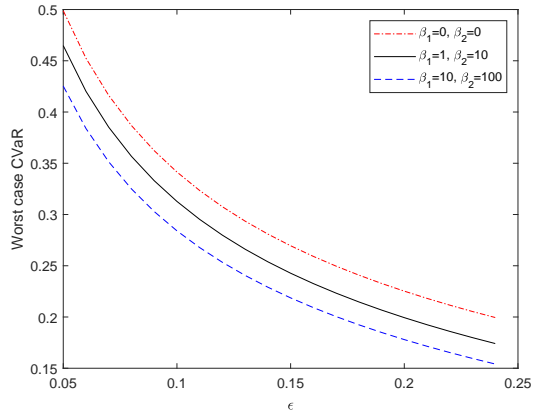}
\caption{The relation between the value of VaR and $\epsilon$ }
\end{figure}

In the subsequent tests, we set
\begin{eqnarray*}
U_{i}=\left\{\zeta\in \mathbb{R}^n|\,\|\zeta\|_{2}\leq \rho_i \right\}, \quad
Z_{i}=\left\{\Xi\in \mathbb{S}^n|\,0\preceq\Xi\preceq \tau_{i}\Sigma_0 \right\},\quad i=1,2.
\end{eqnarray*}
Assume that
\begin{eqnarray*}
\varphi(\mu,\mu')=\frac{\beta_1}{2}(\mu-\mu')^T\Sigma^{-1}_0(\mu-\mu'),\,
\psi(\Sigma,\Sigma')=\frac{\beta_2}{2}tr\left(\left(\Sigma-\Sigma'\right)^T\left(\Sigma-\Sigma'\right) \right)
\end{eqnarray*}

\begin{table}[!hbp]
\caption{$x$ under different value of $\beta_1$.}
\vspace{3mm}
\centering
\begin{tabular}{|c| c |c| c|}
\hline
   $\beta_2=50$  & worst case CVaR   &  $x$    \\
\hline
$\beta_1=0.1$  & 0.2072 &(0.5947, 0.2816, 0.1237)  \\
$\beta_1=1$ & 0.1888  &(0.5548, 0.2919, 0.1533) \\
$\beta_1=5$  &  0.1856 & (0.5397, 0.2958, 0.1645) \\
$\beta_1=10$ & 0.1852  &(0.5377, 0.2963, 0.1660) \\
$\beta_1=50$ & 0.1848  &(0.5361, 0.2967,  0.1671) \\
 \hline
\end{tabular}
\label{table2}
\end{table}
\begin{table}[!h]
\caption{$x$ under different value of $\beta_2$.}
\vspace{3mm}
\centering
\begin{tabular}{|c| c |c| c|}
\hline
   $\beta_1=5$  & worst case CVaR   &  $x$    \\
\hline
$\beta_1=1$  & 0.1995 &(0.7500, 0.2500, 0.0000)  \\
$\beta_1=10$ & 0.1963  &(0.7309, 0.2629,0.0061) \\
$\beta_1=30$  &  0.1903 & (0.6318, 0.2812,0.0870) \\
$\beta_1=50$ & 0.1856  &(0.5397,0.2958,0.1645) \\
$\beta_1=70$ & 0.1820  &(0.4703,0.3049,0.2248) \\
 \hline
\end{tabular}
\label{table2}
\end{table}

Let $A=\Sigma_0^{\frac{1}{2}}, \rho_1=0.5,\rho_2=0.2,\tau_1=0.8$ and $\tau_2=0.3$. Computational experiments on three different random cases are conducted to evaluate the performance of the CVaR. Figure 6.1 shows the performance of the value of CVaR under different values of parameter $\epsilon$. We notice that the red line under case $\beta_1=0, \beta_2=0$ represents the worst case CVaR under distribution set $\mathcal{P}_1$ and the black and blue lines under $\beta_1=1, \beta_2=10$ and $\beta_1=10, \beta_2=50$ represent the globalized worst case CVaR, respectively. When $\epsilon$ becomes large which means that the investor enhances the minimal safety level, the mean of potential portfolio loss decreases under three cases. Compared to distributionally robust portfolio model, the value of CVaR of the globalized distributionally robust portfolio model is smaller and the investor may hold a more optimistic view on the financial market. If the investor holds that the financial market becomes fine and exciting, they can reduce the conservatism of the model by adjusting the parameters of distance function.

Table 6.1 and Table 6.2 show the performance of the portfolio strategies under different values of parameter $\beta_1$ and $\beta_2$, respectively. When $\beta_1$ (resp., $\beta_2$) becomes large which means the investors want to reduce the conservatism of the globalized distributionally robust portfolio model, the investor will invest less money to the lower risk asset and put more money in the second and third stocks which represent the higher risk assets. These facts indicate that the strategy of portfolio obtained by the globalized distributionally robust portfolio model is not conservative enough and so we can carry out more flexible and nonconservative strategies by adjusting the parameters of the distance function. Compared to the original distributionally robust portfolio model, the globalized distributionally robust portfolio model really improves the conservativeness which can not be neglected in the study of the distributionally robust optimization.

\section{Conclusion}
In this paper, we develop the GDRC in terms of the moment-based framework to reduce the conservatism of the distributionally robust counterpart. We obtain the deterministic equivalent reformulations for GDRCs under convex second order moment information with a separable convex distance function and a special function which is jointly in mean and covariance, respectively.  Moreover, the reformulation for the GDRC under assumptions that the first order moment set and the support set are convex and compact with the constraint function which is concave in uncertain parameter has also been presented. In each case, we show the computationally tractable examples for the corresponding reformulation. The numerical test shows that the GDRC is more flexible with respect to both the uncertainty distribution sets and the distance function.

In the future, it would be important and interesting to study the GDRCs under more general moment information set, i.e., second order moment and support information, first order moment and subsets of support information and so on. Another open direction is to study the GDRCs with more general constraint function, i.e., the GDRC under second order moment information with the constraint function which is convex in uncertainty parameter. Also, we would like to explore more computationally tractable examples for conjugate function on $\mathbb{R}^{k\times k}$ to enhance the application of the GDRC. Recently, variety types of distributionally robust optimization problems under information set described by Wasserstein metric and Kullback-Leibler divergence have been discussed by many researchers, it would be interesting to study the GDRC under information set described by Wasserstein metric or Kullback-Leibler divergence.


\begin{thebibliography}{99}

\bibitem{BT} V. Barbu, T. Precupanu, Convexity and Optimization in Banach Spaces, Springer,
New York, 2012.

\bibitem{B}D.S. Bernstein, Matrix Mathematics: Theory, Facts, and Formulas, Princeton University Press,
Princeton and Oxford, 2009.


\bibitem{BGK}
D. Bertsimas, V. Gupta, N. Kallus, Data-driven robust optimization, Math. Program. Ser. A 167(2018)235-292.

\bibitem{BBN}
A. Ben-Tal, S. Boyd, A. Nemirovski, Extending scope of robust optimization: comprehensive robust counterparts of uncertain problems, Math. Program. Ser. B 107(2006)63-89.

\bibitem{BGN}A. Ben-Tal, L. El Ghaoui, A. Nemirovski, Robust Optimization, Princeton University Press, Princeton, 2009.

\bibitem{BBB}A. Ben-tal, D. Bertsimas, D. Brown, A soft robust model for optimization under smbiguity,
 Oper. Res. 58(4)(2010)1220-1234.


\bibitem{BBHV2}
A. Ben-Tal, D. Hertog, J. Vial, Deriving robust counterparts of nonlinear uncertain inequalities,
Math. Program. Ser. A 149(2015)265-299.

\bibitem{BBHV}A. Ben-tal, R. Brekelmans, D. Hertog, J. Vial, Globalized robust optimization for nonlinear uncertain inequalities, INFORMS J. Comput. 29(2)(2017)350-366.

\bibitem{CSZ}
Z. Chang, S. Song, Y. Zhang, J. Ding, R. Chiong,  Distributionally robust single machine scheduling with risk aversion,
Eur. J. Oper. Res. 256(1)(2017)261-274.

\bibitem{DY} E. Delage, Y. Ye, Distributionally robust optimization  under moment uncertainty with application to
data-driven problems, Oper. Res. 58(3)(2010)595-612.

\bibitem{DHX}
K.W. Ding, N.J. Huang, Y.B. Xiao, Distributionally robust chance constrained problems under general moments information,
J. Ind. Manag. Optim. Doi: 10.3934/jimo.2019087.

\bibitem{EKDR}
P. Esfahani, D. Kuhn, Data-driven distributionally robust optimization using the Wasserstein metric: performance guarantees and tractable reformulations, Math. Progam. Ser. A 171(2018)115-166.

\bibitem{FW}
J. Fliege, R. Werner, Robust multiobjective portfolio optimization, Eur. J. Oper. Res. 234(2)(2014)422-433.

\bibitem{GohSim}
J. Goh, M. Sim, Distributionally robust optimization and its tractable approximations, Oper. Res. 58(4)(2010)902-917.

\bibitem{GBHB}
B. Gorissen, H. Blanc, D. Hertog, A. Ben-Tal, Technical note-deriving robust and globalized robust solutions of uncertain linear programs with general convex uncertainty sets, Oper. Res. 62(3)(2014)672-679.


\bibitem{GLTS}
Z. Gong, C. Liu, K.L. Teo, J. Sun, Distributionally robust parameter identification of a time-delay dynamical system with stochastic measurements, Appl. Math. Model. 69(2019)685-695.

\bibitem{GNX}
A. Gourtani, T. Nguyen, H. Xu, A distributionally robust optimization approach  for two-stage facility location problems,
EURO J. Comput. Optim. 8(1)(2020)141-172.

\bibitem{HZFF}
D. Huang, S. Zhu, F.J. Fabozzi, M. Fukushima, Portfolio selection under distributional uncertainty: A relative robust CVaR approach, Eur. J. Oper. Res. 203(1)(2010)185-194.

\bibitem{JWMYZ}
Y. Ji, T. Wang, M. Goh, Y. Zhou, B. Zou, The worst-case discounted regret portfolio optimization problem, Appl. Math. Comput. 239(2014)310-319.

\bibitem{LSXZ}
B. Li, J. Sun, H. Xu and M. Zhang, A class of two-stage distributionally robust stochastic games, J. Ind. Manag. Optim. 15(1)(2019)387-400.

\bibitem{LK}
J. Li, R. Kwon, Portfolio selection under model uncertainty: a penalized moment-based optimization approach,
J. Global Optim. 56(1)(2013)131-164.

\bibitem{LLZ}
K. Liu, Q. Li, Z. Zhang, Distributionally robust optimization of an emergency medical service station location and sizing problem with joint chance constraints, Transport. Res. B-Meth. 119(2019)79-101.

\bibitem{L}
J. Lofberg, YALMIP: A toolbox for modeling and optimization in MATLAB, IEEE International Symposium on Computer Aided Control Systems Design. IEEE, 2005.

\bibitem{MZDR}
S. Mehrotra, H. Zhang, Models and algorithms for distributionally robust least squares problems,
Math. Program. Ser. A 146(2014)123-141.


\bibitem{NMU} K. Natarajan, M. Sim, J. Uichanco, Tractable robust expected utility and risk models for portofolio optimization,
 Math. Financ. 20(4)(2010)695-731.

\bibitem{S} H. Scarf. A Min-Max Solution of an Inventory Problem. In: Studies in the Mathematical Theory of Inventory and Production, Stanford University Press, Redwood City, 1958.

\bibitem{ShapiroD}
A. Shapiro, Distributionally robust stochastic programming, SIAM J. Optim. 27(4)(2014)2258-2275.

\bibitem{Sim}
M. Sim, Robust Optimization, Ph.D thesis in Operations Research,  Massachusetts Institute of Technology, 2004.

\bibitem{SSS}
R. Soltani, J. Safari, S.J. Sadjadi, Robust counterpart optimization for the redundancy allocation problem in series-parallel systems with component mixing under uncertainty, Appl. Math. Comput. 271(2015)80-88.


\bibitem{RU}
R. Rockafellar, S. Uryasev, Optimization of value-at-risk, J. Risk 3(3)(2002)21-41.


\bibitem{WKS}
W. Wiesemann, D. Kuhn, M. Sim, Distributionally robust convex optimization, Oper. Res. 62(6)(2014)1358-1376.

\bibitem{XG}
L. Xin, D.A. Goldberg, Time (in)consistency of multistage distributionally robust inventory models with moment constraints, Eur. J. Oper. Res. Doi: 10.1016/j.ejor.2020.07.041.

\bibitem{ZF}
S. Zhu, M. Fukushima, Worst-case conditional value-at-risk with application to robust portfolio management,
Oper. Res. 57(5)(2009)1155-1168.











\end{thebibliography}
\end{document}